\newtheorem{theorem}{Theorem}[section]
\newtheorem{proposition}[theorem]{Proposition}
\newtheorem{corollary}[theorem]{Corollary}
\newtheorem{definition}[theorem]{Definition}
\newtheorem{example}[theorem]{Example}
\newtheorem{remark}[theorem]{Remark}
\author [Rolando Jimenez, 
V. Vershinin, Y. Muranov]{
 Rolando Jimenez, 
Vladimir Vershinin, Yuri Muranov}
\address{Rolando Jimenez: Instituto de Matematicas,
UNAM, Unidad Oaxaca, 
68000 Oaxaca, Mexico}
\email{rolando.jimenez@im.unam.mx}
\address{Yuri Muranov:
Faculty of Mathematics  and Computer Science, 
University of Warmia and Mazury
 in Olsztyn, ul. Sloneczna 54, 10-710 Olsztyn, Poland}
\email{muranov@matman.uwm.edu.pl}
\address{Vladimir Vershinin:
D\'epartement des Sciences Math\'ematiques,
Universit\'e de Montpellier,  Place Eug\'ene Bataillon,
34095 Montpellier
cedex 5,
 France}
\email{vladimir.vershinine@umontpellier.fr}
\title[On Cubical Sets]{On  Cubical Sets of Quivers and Digraphs}
\thanks{}
\begin{document}
\maketitle

\begin{abstract}
 The singular cubical homology theory for the category of quivers or digraphs  can be 
 constructed similarly to the classical singular homology theory for topological spaces. 
 The case of digraphs and quivers  differs from the topological case  due to the possibility of using a large 
 number of non-isomorphic line digraphs that  correspond to the unit interval in 
 algebraic topology. In this paper   we introduce several different notions of quiver  realizations of a 
cubical set  and we describe  relations between  them. We also define  various singular cubical homology theories  on 
quivers and  digraphs. Moreover,  using  quiver realizations of  cubical sets we define  a collection of 
path homology theories on the category of cubical sets and we describe their properties.

\end{abstract}

{\footnotesize{\bf Keywords:} {box category, cubical set, cubical set of a quiver, homology of digraph, homology 
of quiver,  singular cubical homology,      topological realization of  cubical set, digraph realization of  
cubical set, quiver realization of cubical set, path homology, homology  of cubical set}
\smallskip

{\bf AMS Mathematics Subject Classification 2020:} {18G90, 55N10, 55N35, 05C20, 05C25,    55U99,    57M15}
}

\tableofcontents
%\date{March 26, 2022}

\section{Introduction}
\setcounter{equation}{0}
\label{S1}

The singular cubical homology  of a topological space $X$ was defined using the singular    
$n$-cubes  which are given by    continuous 
maps $f\colon I^n\to X$ where $I^n$ is the $n$-dimensional  cube for the unit segment $I=[0,1]$  
 \cite{Serre, Hatcher, Hilton}. For the category of digraphs, the singular cubical homology theory was constructed 
in \cite{Mi4} using the  line digraph 
$(0\to 1)$ instead of the  segment $I=[0,1]$   in the topological case. 
Afterwards  there was constructed  a collection of \emph{cell singular  homology
theories of digraphs}   for $n\geq 1$ using  line digraphs 
$
(0\to 1\to\dots \to n)$ instead of the digraph  $(0\to 1)$ \cite{Mi}.  Thus, for every $n\geq 1$, we have a singular 
cubical homology theory  which   is  based  on the  digraph
$0\to 1\to\dots \to n$ which   corresponds to the unit interval $I$ in algebraic topology.  

The notion of a cubical set was introduced by Kan as an  algebraic  model for the investigation of singular cubical complex 
$S^{\Box}(X)$ of a topological space $X$~\cite{Kan}.   A cubical set  is a discrete object which is based  on an 
union of cubes in various dimensions with a collection of  special relations. This set is a natural  analog of the  
simplicial set which is based on an union of simplexes (see, also,  \cite{Antolini}, \cite{Nonab}, \cite{Grandis1}).  
Every cubical set $K$ admits a topological realization $|K|_{\bold{Top}}$ which is a $CW$-complex. Moreover,  
for every topological space $X$ there is a week  homotopy equivalence 
$\left|S^{\Box}(X)\right|_{\bold{Top}}\sim X$~\cite{May, Nonab}.

In the present paper,  we introduce various  notions of a quiver  realization of a cubical set    and    various singular 
cubical sets of quivers and digraphs, and we  describe relations between introduces objects. Using this approach we 
construct several singular and path homology theories on the categories of cubical sets,  quivers and digraphs.

In Section \ref{S2},  we give preliminary information and necessary definitions.

In Section \ref{S3}, we introduce a category of quivers $\mathbf Q$ which contains the subcategory $\mathbf D$ 
of digraphs.  Let  $I_k$ be a line digraph consisting of $(k+1)$ vertices 
$0,1, \dots ,k$ and  containing only one  arrow  $i\to i+1$ or $i+1\to i$ between any pair of 
consequent vertices and containing  no other arrows. We define a digraph $n$-cube as the Box product of $n$ 
digraphs $I_k$ and  the box category  $\Bbb D_{I_k}$ using the cubes $I_k^n$  and the morphisms are obtained as
compositions of face inclusions and projections on faces. We introduce the notion of  
homotopy in the category $\mathbf Q$. 

In Section \ref{S4},  for every  quiver $Q$  and a line digraph $I_k$,  we define a singular cubical set 
$S^{\Box}_{I_k}(Q)$  which is given by singular cubes  $\varphi\colon I_k^n\to Q$ and describe relations 
between such sets for different line digraphs
$I_k$.  The consideration of
such singular cubes in digraphs gives singular cubical sets in the category $\mathbf D$. 
 For every cubical set $K$  and a line digraph $I_k$ we define a quiver realization 
$\left|K\right|_{\bold Q}^{I_k}$ and describe relations between the functors of quiver realization of a cubical 
set and  the singular cubical functors of quivers. 

In Section \ref{S5}, we apply introduced constructions to define several homology  theories of cubical sets and quivers 
and we describe relations between them. 

%\section*{Acknowledgments}

%The  two first authors  were partially supported by the CONACyT Grant
%284621.

\section{Preliminaries}
\label{S2} \setcounter{equation}{0}

Recall the notions of the \emph{box category} and \emph{cubical set}
 \cite{Nonab, Grandis1, Kan}.  For $n\geq 1$ let $\Bbb I^n$   be the $n$-fold product of copies  of the set $\Bbb I=\{0,1\}$, and  let $\Bbb I^0=\{0\}$ be the one-element set. 

\begin{definition}\label{d2.1} \rm The \emph{box category} $\Box$ has objects $\Bbb I^n$ for $n\geq 0$ and  morphisms of this category can be obtained by composition of  \emph{face inclusions}  
$$
\delta_i^{\alpha}\colon \Bbb I^{n-1}\to \Bbb I^{n}  \ \text{given by}\  \delta_i^{\alpha}(x_1, \dots , x_{n-1})= (x_1, \dots ,x_{i-1}, \alpha, x_{i}, \dots x_{n-1}), \ \alpha\in \Bbb I 
$$
where $(1\leq i\leq n)\& (n\geq 2)$ and $\delta_1^{\alpha}(0) = \alpha$ for $n=1$ 
and \emph{projections}
$$
\sigma_i\colon \Bbb I^{n}\to \Bbb I^{n-1}  \ \text{given by}\  \sigma_i(x_1, \dots , x_{n})= (x_1, \dots ,x_{i-1}, x_{i+1}, \dots x_{n}) 
$$
where $n\geq 2$,  $1\leq i\leq n$,   and $\sigma_1(x_1)=0$ for $n=0$.
 \end{definition}

\begin{proposition}\label{p2.2} Any morphism in $\Box$  has a unique
expression as a composition 
$$
\delta_{i_1}^{\alpha_1}\dots \delta_{i_k}^{\alpha_k}\sigma_{j_1}\dots \sigma_{j_l}  \ \text{where} \  i_1\leq \dots \leq i_k, \ j_1\leq \dots \leq j_l.
$$
There are the following relations between morphisms  of the category $\Box$:
$$
\delta_j^{\beta}\delta_i^{\alpha}=\delta_i^{\alpha}\delta_{j-1}^{\beta} \ \ \text{for} \  i< j,
$$
$$
\sigma_j\sigma_i=\sigma_i\sigma_{j+1} \ \ \text{for} \  i\leq j,
$$
$$
\sigma_j\delta_i^{\alpha}=
\begin{cases}  \delta_i^{\alpha}\sigma_{j-1} &    \text{for} \  i< j\\
\delta_{i-1}^{\alpha}\sigma_{j} &    \text{for} \  i>j\\
\operatorname{Id}& \text{for} \  i=j.
\end{cases}
$$
\end{proposition}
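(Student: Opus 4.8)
The plan is to prove the three families of relations first (this is quick), then to deduce the existence of the normal form from them by a rewriting argument, and finally to obtain uniqueness by realizing the morphisms of $\Box$ as a concrete class of maps of finite sets. For the relations, I would check each of $\delta_j^\beta\delta_i^\alpha=\delta_i^\alpha\delta_{j-1}^\beta$ (for $i<j$), $\sigma_j\sigma_i=\sigma_i\sigma_{j+1}$ (for $i\leq j$), and the three cases of $\sigma_j\delta_i^\alpha$ by evaluating both sides on a generic tuple $(x_1,\dots,x_r)$, keeping track of which coordinate each $\delta_i^\alpha$ inserts and each $\sigma_j$ deletes and of the index shift it causes; only two or three representative cases need to be spelled out, and the case $i=j$ of the mixed identity is simply the remark that the coordinate inserted by $\delta_i^\alpha$ is exactly the one removed by $\sigma_i$.

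For the existence of the normal form, I would start from an arbitrary word in the $\delta_i^\alpha$ and $\sigma_j$ and proceed in two stages. First, using the relations for $\sigma_j\delta_i^\alpha$, move every projection to the right of every face inclusion: each application rewrites a pattern in which a $\sigma$ stands immediately to the left of a $\delta$, and it strictly decreases the number of such ``$\sigma$ before $\delta$'' pairs (with an additional cancellation when $i=j$), so the procedure terminates in a word $\delta\cdots\delta\,\sigma\cdots\sigma$. Second, sort the block of face inclusions into nondecreasing index order by an insertion-sort argument based on $\delta_j^\beta\delta_i^\alpha=\delta_i^\alpha\delta_{j-1}^\beta$, which straightens an adjacent out-of-order pair while lowering one index, and sort the block of projections similarly using $\sigma_j\sigma_i=\sigma_i\sigma_{j+1}$. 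Since the word is a composite of actual maps, composability forces the upper bounds on the indices which, together with monotonicity, give exactly the normal form in the statement.

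Uniqueness is where the content lies. I would show that any composite of face inclusions and projections, regarded as a map $f\colon\Bbb I^m=\{0,1\}^m\to\{0,1\}^n=\Bbb I^n$, has the following shape: there are a subset $C\subseteq\{1,\dots,n\}$, a value $\gamma_j\in\{0,1\}$ for each $j\in C$, and a strictly increasing injection $\rho$ of $\{1,\dots,n\}\setminus C$ into $\{1,\dots,m\}$ such that $f(x)_j=\gamma_j$ for $j\in C$ and $f(x)_j=x_{\rho(j)}$ for $j\notin C$. The generators manifestly have this shape, and a short computation tracking the reindexings shows that it is preserved under composition (the cases to verify being ``projection after face'' and ``face after projection''), so every morphism of $\Box$ is of this form. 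Conversely, evaluating the function attached to $\delta_{i_1}^{\alpha_1}\dots\delta_{i_k}^{\alpha_k}\sigma_{j_1}\dots\sigma_{j_l}$ recovers the data $(C,\gamma,\operatorname{im}\rho)$ explicitly from the indices — the constant coordinates from the $i_r$ after the shifts that monotonicity entails, their values being the $\alpha_r$, and the discarded input coordinates from the $j_r$ — and this assignment is visibly invertible. Hence the passage from a normal form to its triple $(C,\gamma,\operatorname{im}\rho)$ is a bijection onto the set of all such data, so distinct normal forms give distinct maps; combined with the previous step, a given morphism equals some normal form, hence corresponds to exactly one triple, hence to exactly one normal form. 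As a by-product this identifies $\operatorname{Hom}_{\Box}(\Bbb I^m,\Bbb I^n)$ with the set of all maps of the special shape above, equivalently shows that the listed relations are a complete set.

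The relations and the reduction to normal form are essentially bookkeeping; the delicate point is the uniqueness argument. Concretely, the hard part will be verifying carefully that the special shape of maps is closed under composition and that the correspondence between normal forms and the combinatorial data $(C,\gamma,\operatorname{im}\rho)$ is exactly a bijection — this is precisely where the composability bounds on the indices and the precise inequalities built into the normal form are used.
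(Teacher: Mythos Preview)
The paper does not prove Proposition~\ref{p2.2}; it is stated without proof as a standard fact about the box category, with the references \cite{Nonab,Grandis1,Kan} cited at the top of Section~\ref{S2} for background. Your proposal therefore cannot be compared against a proof in the paper, but it is a correct and standard argument: verify the relations by direct evaluation, use them as rewriting rules to push projections past face inclusions and then sort each block, and finally establish uniqueness by identifying morphisms of $\Box$ with their concrete combinatorial data (the set of constant output coordinates with their values, together with the set of surviving input coordinates). This is essentially the proof one finds in the references, and your identification of the delicate point---checking that the class of ``structured'' maps is closed under composition and that the correspondence between normal forms and the triples $(C,\gamma,\operatorname{im}\rho)$ is bijective---is accurate.
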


\begin{definition}\label{d2.3}  \rm A \emph{cubical 
set} is a functor $K\colon \Box^{op}\to \operatorname{\bf Set}$
and a morphism of cubical sets is a natural transformation of functors.  We denote  by  $\bold{Cub}$ the category of cubical sets. 
\end{definition}

It follows from Definition \ref{d2.3} that a cubical set $K$  is defined by the family of sets 
$\{K_n= K(\Bbb I_n)\, | \, n\geq 0\}$, the face maps 
\begin{equation}
\partial^{\alpha}_i=K\left(\delta^{\alpha}_i\right)\colon K_n\to K_{n-1} \ (\alpha\in \Bbb I,   n\geq 1,  i=1, \dots ,n)
\end{equation}\label{2.1}
and the degeneracy maps 
\begin{equation}\label{2.2}
\varepsilon_i=K(\sigma_i)\colon K_{n-1}\to K_n \ (n\geq 1, \ i=1,\dots ,n)
\end{equation}
which satisfy the following relations 
\begin{equation}\label{2.3}
\begin{matrix}
\partial_i^{\alpha}\partial_j^{\beta}=\partial_{j-1}^{\beta}\partial_{i}^{\alpha} \ \ \text{for} \  i< j,\\
\varepsilon_i\varepsilon_j=\varepsilon_{j+1}\varepsilon_i \ \ \text{for} \  i\leq j,\\
\partial_i^{\alpha}\varepsilon_j=
\begin{cases} \varepsilon_{j-1} \partial_i^{\alpha} &    \text{for} \  i< j\\
\varepsilon_{j}\partial_{i-1}^{\alpha} &    \text{for} \  i>j\\
\operatorname{Id}& \text{for} \  i=j.
\end{cases}\\
\end{matrix}
\end{equation}
 We note that  morphisms of cubical sets are maps 
of sets  $K_n\to K_n^{\prime}$ for all $n$ which commute with face and degeneracy maps.

Let $T=[0,1]$ be the unit interval and $T^n$ be the standard $n$-cube for $n\geq 1$ and $T^0=\{0\}$. Then the maps 
$
\delta_i^{\alpha}\colon  T^{n-1}\to T^{n}  
$
and 
$\sigma_i\colon T^{n}\to T^{n-1}$
are defined similarly to definition \ref{d2.1} and they satisfy the equalities of   
Proposition \ref{p2.2}.

\begin{definition}\label{d2.4} \rm  A \emph{topological realization}  $|K|_{\bold{Top}}$ of a cubical set 
$K\colon \Box^{op}\to \bold{Set}$ is the quotient topological  space  
$$
 |K|_{\bold{Top}}= \{\coprod_n K_n\times T^n\}/\sim 
$$
where the set $K_n$ is equipped with the  discrete topology,  $T^n$ is the $n$-cube   and the  equivalence relation is generated  by 
$$
(\partial^{\alpha}_i(x), v))\sim (x, \delta_i^{\alpha}(v))   \ \text{for}  \ x\in K_n, 
 v\in T^{n-1}
$$
and
$$
 (\varepsilon_i(y), w)\sim (y, \sigma_i(w))  \ \text{for} \  y\in K_{n-1}, w\in T^n.
$$
\end{definition}

\begin{proposition}\label{p2.5}  {\rm \cite[Pr. 10.1.12]{Nonab}} The topological space 
$|K|_{\bold{Top}}$ has a natural structure of a $CW$-complex having one $n$-cell for each non-degenerate $n$-cube.  
The topological  realization $|\ |_{\bold{Top}}$  is  the functor  from the category $\bold{Cub}$ of cubical sets to the category $\bold{CW}$ of 
$CW$-complexes and cell-maps.  
\end{proposition}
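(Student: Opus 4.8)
The plan is to verify the two assertions of Proposition~\ref{p2.5} in turn: first that $|K|_{\bold{Top}}$ carries a natural $CW$-structure with one $n$-cell per non-degenerate $n$-cube, and second that the construction is functorial. Since this is cited from \cite[Pr.~10.1.12]{Nonab}, the proof is essentially a matter of assembling standard facts, but I would spell it out as follows. For the $CW$-structure, first I would stratify the disjoint union $\coprod_n K_n\times T^n$ by skeleta: let $|K|^{(n)}$ be the image in $|K|_{\bold{Top}}$ of $\coprod_{m\le n} K_m\times T^m$. The key combinatorial input is Proposition~\ref{p2.2}: every morphism of $\Box$ factors uniquely as a composite of face inclusions followed by (or rather, in the dual picture, preceded by) degeneracies, so every element $x\in K_n$ can be written uniquely as $\varepsilon_{j_1}\cdots\varepsilon_{j_l}(y)$ with $y\in K_{n-l}$ non-degenerate and $j_1\le\cdots\le j_l$. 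This "Eilenberg--Zilber"-type normal form is what guarantees that in forming the quotient, every point of $K_n\times T^n$ with $x$ degenerate gets identified with a point of some $K_m\times T^m$ with $m<n$ via the degeneracy relations, while the face relations glue the boundary $\partial T^n$ of a non-degenerate cube into the lower skeleton.

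Concretely, I would argue that $|K|^{(n)}$ is obtained from $|K|^{(n-1)}$ by attaching one copy of $T^n\cong D^n$ for each non-degenerate $x\in K_n$, along the attaching map $\partial T^n\to |K|^{(n-1)}$ sending the face $\delta_i^{\alpha}(T^{n-1})$ to the point represented by $(\partial_i^{\alpha}(x),-)$ in $|K|^{(n-1)}$ (well-defined on overlaps of faces by the first relation in~\eqref{2.3}). One then has to check: (i) the characteristic map $T^n\to |K|_{\bold{Top}}$ restricted to the open cube is injective with image disjoint from the $(n-1)$-skeleton --- this uses the uniqueness in the normal form, i.e. two distinct non-degenerate cubes or two distinct interior points are never identified; (ii) the quotient topology agrees with the weak topology determined by the characteristic maps, which follows because the quotient of a coproduct by such a closed equivalence relation, built up skeleton by skeleton, is exactly the colimit defining a $CW$-complex. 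The verification that the equivalence relation is closed and that each attaching map is continuous is routine given that $\delta_i^{\alpha}$ and $\sigma_i$ are continuous affine maps on cubes.

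For functoriality, given a morphism $f\colon K\to K'$ of cubical sets, I would define $|f|_{\bold{Top}}\colon |K|_{\bold{Top}}\to |K'|_{\bold{Top}}$ on representatives by $(x,v)\mapsto (f_n(x),v)$ for $x\in K_n$, $v\in T^n$. This descends to the quotient precisely because $f$ commutes with all face and degeneracy maps (Definition~\ref{d2.3} and the remark after~\eqref{2.3}): the two defining relations are preserved, e.g. $(\partial_i^{\alpha}(x),v)\mapsto(f_{n-1}\partial_i^{\alpha}(x),v)=(\partial_i^{\alpha}f_n(x),v)\sim(f_n(x),\delta_i^{\alpha}(v))$. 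Since $|f|_{\bold{Top}}$ sends characteristic maps to characteristic maps and preserves skeleta (a non-degenerate cube can map to a degenerate one, but that only collapses a cell onto a lower-dimensional one, which is still a cell-map), it is a cell-map. Compatibility with composition and identities is immediate from the formula on representatives. I would also record that $|-|_{\bold{Top}}$ carries the standard cube $\Box[n]=\Box(-,\Bbb I^n)$ to $T^n$ and face/degeneracy operators to the maps $\delta_i^{\alpha}$, $\sigma_i$ on cubes, so it is compatible with the cocubical structure.

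The main obstacle --- and really the only non-formal point --- is step (i) above: showing that the equivalence relation does not over-identify, so that each non-degenerate cube genuinely contributes an $n$-cell whose interior embeds. This is where the uniqueness clause of Proposition~\ref{p2.2} does the essential work, translated into the statement that every element of $K_n$ has a \emph{unique} expression as a degeneracy of a non-degenerate element; one must check that the equivalence classes of interior points of $T^n\times\{x\}$ for $x$ non-degenerate are singletons, which amounts to tracing through the generators of $\sim$ and using that a nontrivial composite of projections $\sigma_i$ strictly lowers dimension while face inclusions $\delta_i^{\alpha}$ land in the boundary $\partial T^n$. Everything else --- continuity, the colimit/weak-topology identification, and functoriality --- is bookkeeping with continuous affine maps on cubes, and I would cite \cite{Nonab} for the detailed verification rather than reproduce it.
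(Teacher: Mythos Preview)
The paper does not prove Proposition~\ref{p2.5}: it is stated with a citation to \cite[Pr.~10.1.12]{Nonab} and no proof is given. Your sketch is the standard argument (normal form from Proposition~\ref{p2.2}, skeletal induction to obtain the $CW$-structure, and the evident formula on representatives for functoriality), and it is correct; in particular your identification of the only non-formal point --- injectivity on open cells via uniqueness of the degeneracy decomposition --- is exactly where the work lies.
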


Now we recall the standard notions  of the graph theory which we need in the paper (see, 
 \cite{MiHomotopy}). 

\begin{definition}
\label{d2.6}\rm \emph{A directed graph (digraph)}  $G=\left(
V_{G},E_{G}\right) $ consists of   a set $V_{G}$ of \emph{vertices} and  a
subset $E_{G}\subset \{V_{G}\times V_{G}\setminus \operatorname{diagonal}\}$ of ordered
pairs $(v,w)$ of vertices  which are called \emph{arrows}. The ordered pair of vertices  $(v,w)$ is  denoted  by
 $v\to  w$. The vertex $v=\mbox{orig }(v\rightarrow w)$ is called 
the \emph{origin of the arrow} and the vertex $w=\mbox{end}(v\rightarrow w)$
is called the \emph{end of the arrow}.
\end{definition}

 For $v,w\in V_{G}$, we write $v\,\overrightarrow{=}w$ if either $
v=w $ or $v\rightarrow w$. A digraph  $H$ is a  \emph{subgraph} of a digraph $G$  and we write  $H\subset G$ if  $V_H\subset V_G$ and $E_H\subset E_G$.  A \emph{directed path}
 in a digraph $G$ is a sequence of vertices  $a_i\in V_G\, (0\leq i\leq n)$ such that  $(a_{i}\to a_{i+1})\in E_G$ for $0\leq i\leq n-1$. The number $n$  of arrows
$a_{i}\to a_{i+1}$ fitting into the path is called the \emph{length} of the path.  The vertex $a_0$ is \emph{the origin of the path} and the vertex $a_{n}$ 
is \emph{the end of the path}.

\begin{definition}
\label{d2.7} \rm A  \emph{digraph map}  
$f\colon G\to H$ is  given by a map 
$f\colon
V_{G}\rightarrow V_{H}$  between  the sets  of vertices such that $v\overrightarrow{=} w$  in $G$  implies $f\left(
v\right) $\thinspace $\overrightarrow{=}f\left( w\right) $ in $H$. We call the map $f$ \emph{non-degenerate on an arrow}   
$(v\to w)\in E_G$ if $(f(v)\to f(w))\in E_H$.
\end{definition}

All digraphs with digraph maps form the  \emph{category of digraphs},
which  is denoted by $\bold{D}$.

\begin{definition}
\label{d2.8}\rm For two digraphs $G$ and $H$  define   the   \emph{Box  product}
$\Pi =G\Box H$ as the digraph with the set of
vertices $V_{\Pi }=V_{G}\times V_{H}$ and the set of arrows $E_{\Pi }$ given
by the rule
\begin{equation*}
(x,y)\rightarrow (x^{\prime },y^{\prime })\ \ \text{if }x=x^{\prime }\text{
and }y\rightarrow y^{\prime }\text{, or }x\rightarrow x^{\prime }\text{ and }%
y=y^{\prime },
\end{equation*}
where $x,x^{\prime }\in V_{G}$ and $y,y^{\prime }\in V_{H}$. 
\end{definition}

\begin{definition}\label{d2.9} \rm  Two digraph maps $f,g\colon
G\rightarrow H$ are\emph{\ }called \emph{homotopic} if there exists a line
digraph $I_{n}$ with $n\geq 1$ and a digraph map
\begin{equation*}
F\colon G\Box I_{n}\rightarrow H
\end{equation*}%
such that 
\begin{equation*}
F|_{G\Box \{0\}}=f\ \ \text{and}\ \ F|_{G\Box \{n\}}=g 
\end{equation*}
where we identify $G\Box \{0\}$ and $G\Box \{n\}$ with $G$ in a natural way. In this case we shall write $f\simeq g$. The map $F$ is called a \emph{
homotopy} between $f$ and $g$. In the case $n=1$ we refer to the map $F$ as an \emph{one-step homotopy}.
\end{definition}

\begin{definition}\label{d2.10}\rm Digraphs $G$ and $H$ are called \emph{homotopy equivalent
} if there exist  maps 
\begin{equation*}
f:G\rightarrow H,\ \ \ g:H\rightarrow G  \label{fGH}
\end{equation*}
such that 
\begin{equation*}
f\circ g\simeq \operatorname{id}_{H},\ \ \ \ \ g\circ f\simeq \operatorname{id}_{G}.
\label{fg}
\end{equation*}%
In this case we shall write $H\simeq G$ and  the maps $f$ and $g$ 
are called \emph{\ homotopy inverses }of each other.
\end{definition}

All digraphs with homotopic classes of digraph maps form the  \emph{homotopy category of digraphs} which  is denoted by $\bold{HoD}$~\cite{MiHomotopy}. 

\section{Category of quivers and $\Box$-categories of digraphs}\setcounter{equation}{0}
\label{S3}

We define a category of quivers  $\mathbf Q$ in such a way that the category of digraphs  $\mathbf D$ will be a subcategory of $\mathbf Q$ (see, for example, \cite{Enoch, MurVer}).

\begin{definition} \rm 
\label{d3.1}   A \emph{quiver}   $Q=(V,E,s,t)$ consists of a set of vertices 
$V$, a set of edges (arrows) $E$,  and two maps   $s,t\colon E\rightarrow V$.  The vertex    $s(a)\in V$
is called the start vertex of $a$,  and the vertex  $t(a)$  is called the end vertex of $a$. 

The map 
 $f:Q\rightarrow Q^{\prime }$ of quivers is given by a pair of maps 
  $f_{V}\colon V\rightarrow V^{\prime }$ and  
 $f_{E}\colon E\rightarrow E^{\prime }\cup V^{\prime}$ such that for every  $a\in E$ only one of the following  two conditions: 

i)  $f_E(a) \in E^{\prime}$ and $f_{V}(s(a))=s^{\prime }(f_{E}(a))$,      $f_{V}(t(a))=t^{\prime}(f_{E}(a))$,  

ii)  $f_E(a)=v^{\prime} \in V^{\prime}$ and $f_{V}(s(a))=f_{V}(t(a))=v^{\prime}$

\noindent is  satisfied. We denote by  $\mathbf  Q$ the category of quivers and their maps. 
\end{definition}

 Every digraph $G=(V, E)$ defines a  quiver $Q=(V,E,s,t)$,  where $s(v\to w)=v, t(v\to w)=w$ for $(v\to w)\in E$.  
Every map $f\colon G\to G^{\prime}$ of digraphs defines the map $f=(f_V, f_E)$  of the corresponding quivers as follows. 
The map  $f_V$  coincides with  the map $f\colon V\to V^{\prime}$ on the set  of quiver   vertices   and 
$$
f_{E}(v\to w)=\begin{cases} f(v)\to f(w) & \text{for}\  f(v)\ne f(w),\\
                                            f(v)&  \text{for}\  f(v)= f(w).\\
\end{cases}
$$
 Hence,  we have  the inclusion of categories  $\mathbf D\subset \mathbf Q$.

Let us fix $n\geq 0$ and  denote by   $I_{n}$ a digraph with the  set of
vertices $V=\{0,1,\dots ,n\}$ and such that   for $i=0,1,\dots n-1$,  there is exactly one arrow $i\to i+1$ or $i+1\to i$ 
and there are no other arrows.  Such digraph is  called a  \emph{line} digraph and a \emph{direct line} digraph if additionally 
all arrows are of  the form of $i\to i+1$.  We denote the line digraph $0\to 1$ by $I$. Define a category $\bold{I}$ of 
line digraphs and digraph maps. It has a subcategory $\bold{DI}$ of directed line digraphs.

Let us fix a line digraph $I_k (k\geq 1)$. For $n \geq 0$, we define  an \emph{$n$-cube digraph}  $I^n_k$  as
follows. For $n = 0$ we put $I^0_k = \{0\} $ --- one-vertex digraph and  for $n \geq 1$, 
$I^n_k$ is given by  
$$
I^n_k = \underbrace{I_k\Box I\Box I_k\Box \dots \Box I_k}_{n-times}.
$$
Note, that for $n\geq 0$, every digraph morphism $\tau\colon I_k\to I_m$ induces a digraph morphism 
$\tau^n\colon  I_k^n\to I_m^n$.

For    $n\geq 2, \ 1\leq i\leq n$, and $\alpha\in \{0,k\}\subset  V_{I_k}$, we define the
following inclusion  $\delta_{i}^{\alpha}\colon I^{n-1}_k\rightarrow I^{n}_k$ of the face of digraphs given on the set of vertices by 
\begin{equation}\label{3.1}
\begin{matrix}
\delta_{i}^{\alpha }(c_{1},\dots ,c_n)=(c_{1},\dots ,c_{i-1},\alpha
,c_{i},\dots , c_n) \ \text{for} \  \ c_j\in V_{I_k}.
\end{matrix}
\end{equation}
We define also $\delta_1^{\alpha}\colon I_k^0=\{0\}\to I_k^1=I_k$ by  $\delta_{1}^{\alpha }(0)=\left( \alpha \right)$.

For  $n\geq 2$  and $1\leq i\leq n$,  let us consider the  natural projection 
$\sigma_i\colon I^{n}_k\to I^{n-1}_k$ on the $i$-face $I^{n-1}_k$ given on the set of vertices by 
\begin{equation}\label{3.2}
\sigma_i(c_1,\dots, c_{n})=(c_1,\dots, c_{i-1},c_{i+1}, \dots,c_{n})  \ \text{for} \  \ c_j\in V_{I_k}, 
\end{equation}
 and let 
$\sigma_1\colon I^1_k\to \{0\}$  be given on the set of  vertices  by $\sigma_1(c_1)=0$ for $c_1\in V_{I_k}$.  
For $k\geq 1$ denote by  $\Bbb D_k$ the category  consisting of $n$-cubes $I^n_k$ and  maps obtained by compositions of the maps $\delta_i^{\alpha}, \sigma_i$ defined above.

\begin{proposition}\label{p3.2}  For fixed $k\geq 1$ and every line digraph $I_k$, the category $\Bbb D_k$, 
with objects $I^n_k$ and  morphisms generated  by $\sigma_i$, $\delta_i^{\alpha}$,   is isomorphic to the $\Box$-category.
\end{proposition}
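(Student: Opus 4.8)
The plan is to exhibit an explicit functor $\Phi\colon \Box\to\Bbb D_k$ and check it is an isomorphism of categories. On objects, set $\Phi(\Bbb I^n)=I_k^n$; this is evidently a bijection on object classes since both categories have exactly one object in each dimension $n\geq 0$. On morphisms, the category $\Box$ is generated by the face inclusions $\delta_i^\alpha$ and projections $\sigma_i$ subject to the cubical relations of Proposition \ref{p2.2}, while $\Bbb D_k$ is by definition generated by the maps $\delta_i^\alpha,\sigma_i$ of \eqref{3.1}--\eqref{3.2}. So I would define $\Phi$ on generators by sending each generator to the generator of the same name, and the first real task is to verify that the relations of Proposition \ref{p2.2} hold among the digraph maps $\delta_i^\alpha,\sigma_i$ in $\Bbb D_k$. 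This is a routine coordinate computation: each generator acts on vertex tuples $(c_1,\dots,c_m)\in V_{I_k}^m$ by exactly the same insertion/deletion formula as in the set-theoretic box category (the only difference from Definition \ref{d2.1} is that the inserted coordinate $\alpha$ ranges over $\{0,k\}$ rather than $\{0,1\}$, which does not affect any identity), so the same verification as for $\Box$ applies verbatim. Hence $\Phi$ is a well-defined functor, and it is full by construction since every morphism of $\Bbb D_k$ is a composite of the generators.

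The substance of the proof is faithfulness, i.e. showing no extra relations hold in $\Bbb D_k$. By Proposition \ref{p2.2} every morphism of $\Box$ has a unique normal form $\delta_{i_1}^{\alpha_1}\cdots\delta_{i_k}^{\alpha_k}\sigma_{j_1}\cdots\sigma_{j_l}$; I would show that the corresponding composite of digraph maps $I_k^n\to I_k^m$ is a genuinely distinct digraph map for distinct normal forms. The key observation is that each such composite is, on vertices, the map
\[
(c_1,\dots,c_p)\longmapsto (c_{\pi(1)},\dots,c_{\pi(q)})
\]
where certain coordinates are deleted (the $\sigma$'s), certain constant coordinates equal to $0$ or $k$ are inserted (the $\delta$'s), and the surviving original coordinates appear in their original relative order. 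Two such maps agree iff they delete the same coordinates, insert constants in the same slots with the same values, and permute identically — which, by the uniqueness in Proposition \ref{p2.2}, happens iff the normal forms coincide. To make this airtight I would evaluate the composite on the "generic" vertices of $I_k^p$ (for instance the tuples with all coordinates equal, and the tuples $e_r$ having $c_r$ at an interior value and all others $0$), from which both the deletion pattern and the insertion pattern can be read off; since $k\geq 1$, the vertex set $V_{I_k}=\{0,1,\dots,k\}$ has the interior value $1$ available, so these test vertices exist and the argument goes through uniformly in $k$.

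I expect the main obstacle to be purely bookkeeping: setting up clean notation for "the composite of a string of $\delta$'s and $\sigma$'s as an explicit coordinate map" and tracking how the normal-form indices $i_1\leq\dots\leq i_k$, $j_1\leq\dots\leq j_l$ translate into the positions of inserted and deleted coordinates. Once that dictionary is in place, both well-definedness of $\Phi$ (the relations) and injectivity on morphisms (distinct normal forms give distinct coordinate maps) reduce to the same elementary combinatorics, and the arrow condition of Definition \ref{d2.7} is automatic because each generator — and hence each composite — either deletes a coordinate or inserts a constant, both of which send $v\,\overrightarrow{=}\,w$ to $f(v)\,\overrightarrow{=}\,f(w)$ in $I_k^m$. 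Combining fullness, faithfulness, and bijectivity on objects yields that $\Phi$ is an isomorphism $\Box\xrightarrow{\ \sim\ }\Bbb D_k$.
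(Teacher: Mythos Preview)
The paper states Proposition~\ref{p3.2} without proof, treating the isomorphism as evident from the identical coordinate formulas. Your proposal is correct and supplies exactly the routine verification the paper omits: define $\Phi$ on generators, check that the cubical relations of Proposition~\ref{p2.2} hold in $\Bbb D_k$ (immediate, since the formulas (\ref{3.1})--(\ref{3.2}) are literally those of Definition~\ref{d2.1} with $\{0,k\}$ in place of $\{0,1\}$), and then establish faithfulness via normal forms.

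One simplification for the faithfulness step: your ``interior test vertex'' argument wobbles at $k=1$, since then $1=k$ is an endpoint rather than an interior value. A cleaner and uniform argument is to observe that every composite of the generators $\delta_i^\alpha,\sigma_i$ in $\Bbb D_k$ preserves the subset $\{0,k\}^m\subset V_{I_k^m}$ of corner vertices, and its restriction there is, under the bijection $\{0,k\}\cong\Bbb I$, precisely the corresponding morphism of $\Box$. Thus equal digraph maps in $\Bbb D_k$ restrict to equal maps on corners, forcing their $\Box$-preimages to agree; this gives faithfulness for all $k\geq 1$ at once and avoids the bookkeeping you anticipated.
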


  Since we have the inclusion of categories $\mathbf D\subset \mathbf Q$,  we can consider every digraph as a quiver 
if this not leads to confusion.  In order to introduce the notion of homotopy in  $\mathbf Q$  we  reformulate Definition   \ref{d2.9} for this category. 

\begin{definition} \label{d3.3}\rm For a line digraph $I_k\, (k\geq 0)$ and a quiver $Q=(V,E,s,t)$ we define   the   \emph{Box  product}
$\Pi =I_k\Box \,Q=(V_{\Pi}, E_{\Pi}, s_{\Pi},t_{\Pi})$ as the quiver  with the set of
vertices $V_{\Pi }=V_{I_k}\times V_{H}$,  the set of arrows 
$$
E_{\Pi }=\{(v, a)| v\in V_{I_k}, a\in E\}\cup
 \{[(v,w)\to (v^{\prime},w)]\,|\, ( v\to v^{\prime})\in E_{I_k}, w\in V\}, 
$$
 and the maps $s_{\Pi},t_{\Pi}$ given
by the rule
\begin{equation*}
\begin{matrix}
s_{\Pi}(v, a)=(v, s(a)), \  t_{\Pi}(v, a)=(v, t(a)),\\
s_{\Pi}[(v,w)\to (v^{\prime},w)]= (v,w), \  t_{\Pi}[(v,w)\to (v^{\prime},w)]= (v^{\prime},w). 
\end{matrix}
\end{equation*}
where  $v, v^{\prime}\in V_{I_k}, a\in E, ( v\to v^{\prime})\in E_{I_k}, w\in V$.
\end{definition}

 \begin{definition} \label{d3.4}\rm Two quiver  maps $f,g\colon
Q\rightarrow Q^{\prime}$ are\emph{\ }called \emph{homotopic} if there exists a line
digraph $I_{k}$  and a quiver map
\begin{equation*}
F\colon I_{k}\Box Q\rightarrow Q^{\prime}
\end{equation*}%
such that 
\begin{equation*}
F|_{\{0\}\Box Q}=f\ \ \text{and}\ \ F|_{\{k\}\Box Q}=g 
\end{equation*}
where we identify $\{0\}\Box Q$ and $\{k\}\Box Q$ with $Q$ in a natural way. In this case we shall write $f\simeq g$. The map $F$ is called a \emph{
homotopy} between $f$ and $g$. In the case $n=1$ we call the map $F$  a \emph{one-step homotopy}.
\end{definition}

\begin{definition}\label{d3.5} \rm  A quiver  $Q=(V,E,s,t)$ is \emph{simple} if the following two conditions are satisfied: 

 i) $s(a)\ne t(a)$ for every arrow  $a\in E$,

ii)   ordered pairs of vertices  $(s(a), t(a))$ and   $(s(b), t(b) )$ are different for every two different  arrows  $a, b\in E$. 
\end{definition}

\begin{remark}\label{r3.6} \rm  It is clear that every simple quiver $Q=(V,E,s,t)$ defines a digraph $G=(V_G,E_G)$ where $V_G=V$, $E_G=\{s(a)\to t(a)\,|\, a\in E\}$. 
\end{remark}

\section{Singular cubical set of a quiver and the  quiver realization of a cubical set}
\setcounter{equation}{0}
\label{S4}

Now,  for every quiver $Q$ and every line  digraph $I_k$,  we define a  cubical set which functorially depends of  $Q$ and  $I_k$. 
More precisely, every line digraph $I_k$ defines a cubical set $S_{I_k}^{\Box}(Q)$ consisting in dimension 
$n\geq 0$ of singular digraph cubes which are  given by quiver maps $\varphi\colon I_k^n\to Q$. Any morphism of quivers $f\colon Q\to Q^{\prime}$ induces 
the morphism   $f_{\Box}=f\varphi\colon S_{I_k}^{\Box}(Q)\to S_{I_k}^{\Box}(Q^{\prime})$, and any morphism $\tau\colon I_k\to I_m$ induces the morphism 
$\tau^{\Box}=\varphi\tau \colon S_{I_m}^{\Box}(Q)\to S_{I_k}^{\Box}(Q)$. 
 Hence  singular cubical sets of quivers give a collection of covariant functors from the category $\bf Q$ to the category $\bold{Cub}$ and a collection  of contravariant functors
 from the category $\bold I$ to the category $\bold{Cub}$. We describe relations between obtained functors and give  applications of these results to the category of digraphs  $\mathbf D\subset \mathbf Q$. 

 \begin{remark}\label{r4.1} \rm For a topological space $X$,  the singular cubical set   $S^{\Box}(X)$  is given by singular topological cubes in $X$ (see  \cite{Antolini}, \cite{F},  \cite{Kan},  \cite{Massey}, \cite{Serre}).  The definition of singular cubical set  $S_{I_k}^{\Box}(Q)$ of a quiver $Q$ given below  is similar to the definition  $S^{\Box}(X)$ of a topological space $X$. Recall that for a  topological space $X$ the topological realization $\left| S^{\Box}(X)\right|_{\bold{Top}}$ and $X$ are week homotopy  equivalent.
\end{remark}

Fix a line digraph $I_k \  (k\geq 1)$. For $n\geq 0$, \emph{a singular $n$-cube in  a quiver $Q$} is a quiver  map $\phi\colon I^n_k \to Q$. For any quiver $Q$ denote by $S_{I_k}^{\Box}(Q)=\left\{\left[S_{I_k}^{\Box}(Q)\right]_n\left|\, \right. n\geq 0\right\}$ the set of all singular cubes  $\phi\colon I^n_k \to Q\, (n\geq 0)$. 

\begin{proposition}\label{p4.2} Let $Q$ be a quiver.  For $n\geq 1$  define  the face morphism 
$\partial_i^{\alpha} \colon \left[S_{I_k}^{\Box}(Q)\right]_n \to  \left[S_{I_k}^{\Box}(Q)\right]_{n-1}$  by  $\partial_i^{\alpha}(\phi)=\phi \delta^{\alpha}_i$ and define the degeneracy  morphism  $\varepsilon_i  \colon \left[S_{I_k}^{\Box}(Q)\right]_{n-1} \to \left[S_{I_k}^{\Box}(Q)\right]_{n}$ by
 $\varepsilon(\phi)= \phi\sigma_i$.  Thus  $S_{I_k}^{\Box}(Q)$ is   a cubical set. 
In such a way,  for any line digraph $I_k$, we obtain a singular cubical  functor 
$$
S_{I_k}^{\Box}\colon \bold{Q}\to \bold{Cub}. 
$$
The restriction of this functor to the subcategory of digraphs $\mathbf D\subset \mathbf Q$ provides 
a functor  $\bold{D}\to \bold{Cub}$ which we continue to denote $S_{I_k}^{\Box}$. 
\end{proposition}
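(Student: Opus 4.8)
\section*{Proof proposal}

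The plan is to recognize $S_{I_k}^{\Box}(Q)$, under the identification of Proposition \ref{p3.2}, as the contravariant hom-functor $\operatorname{Hom}_{\mathbf Q}(-,Q)$ restricted to the $n$-cubes, so that the cubical identities \eqref{2.3} come for free from functoriality. Every $n$-cube $I_k^n$ is a digraph, hence a quiver, and the face inclusions $\delta_i^{\alpha}$ of \eqref{3.1} together with the projections $\sigma_i$ of \eqref{3.2} are digraph maps, so the category $\Bbb D_k$ of Section \ref{S3} is a subcategory of $\mathbf D$, hence of $\mathbf Q$. Composing the inclusion $\Bbb D_k\hookrightarrow\mathbf Q$ with $\operatorname{Hom}_{\mathbf Q}(-,Q)\colon \mathbf Q^{op}\to\bold{Set}$ produces a functor $\Bbb D_k^{op}\to\bold{Set}$, and precomposing with the isomorphism $\Box\cong\Bbb D_k$ of Proposition \ref{p3.2} gives a functor $K\colon \Box^{op}\to\bold{Set}$, that is, a cubical set in the sense of Definition \ref{d2.3}. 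By construction $K(\Bbb I^n)=\operatorname{Hom}_{\mathbf Q}(I_k^n,Q)=\left[S_{I_k}^{\Box}(Q)\right]_n$, and, because $\operatorname{Hom}_{\mathbf Q}(-,Q)$ is contravariant, $K(\delta_i^{\alpha})$ sends $\phi\mapsto \phi\delta_i^{\alpha}$ and $K(\sigma_i)$ sends $\phi\mapsto \phi\sigma_i$; these are exactly the maps $\partial_i^{\alpha}$ and $\varepsilon_i$ of the statement, the low-dimensional generators $\delta_1^{\alpha}\colon I_k^0\to I_k$ and $\sigma_1\colon I_k\to\{0\}$ corresponding under Proposition \ref{p3.2} to $\delta_1^{\alpha},\sigma_1$ in $\Box$. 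Hence $S_{I_k}^{\Box}(Q)$ equipped with the stated face and degeneracy maps is a cubical set; the relations \eqref{2.3} are precisely the images, under the contravariant functor $\operatorname{Hom}_{\mathbf Q}(-,Q)$, of the relations of Proposition \ref{p2.2}, which hold in $\Bbb D_k$ by Proposition \ref{p3.2}.

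For functoriality in $Q$, a quiver map $f\colon Q\to Q'$ induces by post-composition $\phi\mapsto f\phi$ a natural transformation $\operatorname{Hom}_{\mathbf Q}(-,Q)\Rightarrow\operatorname{Hom}_{\mathbf Q}(-,Q')$; its naturality with respect to the morphisms of $\Box\cong\Bbb D_k$ is exactly the assertion that $f_{\Box}$ commutes with the face and degeneracy maps, for instance $f_{\Box}\bigl(\partial_i^{\alpha}\phi\bigr)=f(\phi\delta_i^{\alpha})=(f\phi)\delta_i^{\alpha}=\partial_i^{\alpha}\bigl(f_{\Box}\phi\bigr)$, so that $f_{\Box}$ is a morphism of cubical sets. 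Associativity of composition in $\mathbf Q$ together with $(\operatorname{id}_Q)_{\Box}=\operatorname{id}$ gives $(gf)_{\Box}=g_{\Box}f_{\Box}$, whence $S_{I_k}^{\Box}\colon\mathbf Q\to\bold{Cub}$ is a functor. Restricting it along the inclusion $\mathbf D\subset\mathbf Q$ of Section \ref{S3} yields the last assertion.

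The only point requiring genuine work --- that $\delta_i^{\alpha}$ and $\sigma_i$ of \eqref{3.1}--\eqref{3.2} are well-defined digraph maps among the cubes $I_k^n$ and satisfy the box relations --- is exactly Proposition \ref{p3.2}, so I expect no real obstacle here. Should one prefer to avoid the categorical packaging, the same argument can be carried out by hand: substitute a singular cube $\phi\colon I_k^n\to Q$ into each relation of Proposition \ref{p2.2}, read in $\Bbb D_k$, and read off \eqref{2.3}, treating $n=0$ and $n=1$ separately because of the exceptional definitions of $\delta_1^{\alpha}$ and $\sigma_1$; one also records that $\phi\delta_i^{\alpha}$ and $\phi\sigma_i$ are again quiver maps simply because composition is defined in $\mathbf Q$, even when $\sigma_i$ collapses an arrow to a vertex.
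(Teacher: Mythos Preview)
Your proposal is correct. The paper actually states Proposition \ref{p4.2} without proof, evidently regarding it as an immediate consequence of Proposition \ref{p3.2} and the usual fact that $\operatorname{Hom}(-,Q)$ is a contravariant functor; your argument is precisely the unpacking of that implicit reasoning, so there is nothing to compare beyond noting that you have supplied the details the paper leaves to the reader.
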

%\begin{proof} The proof is standard. 
%\end{proof}

\begin{remark}\label{r4.3} \rm In the case of directed line digraphs singular cubes were introduced in \cite{Mi4} and \cite{Mi} for the construction of singular cubical homology groups and cell singular cubical homology groups of digraphs. 
\end{remark}

\begin{proposition}\label{p4.4} For any quiver $Q$, we have a contravariant functor \ 
$
\mathcal I\colon \bold I\to \bold{Cub}
$
defined on the objects by $\mathcal I(I_k)= S_{I_k}^{\Box}(Q)$,   and for any digraph morphism 
$\tau\colon I_k\to I_m$ we define 
$
\mathcal I(\tau)\colon S_{I_m}^{\Box}(Q)\to S_{I_k}^{\Box}(Q)
$
for $\phi\colon  I^n_m\to Q$ putting $\mathcal I(\tau)[\phi]= \phi\circ \tau^n$.  
\end{proposition}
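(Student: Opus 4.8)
There are two things to establish: that for each morphism $\tau\colon I_k\to I_m$ of $\mathbf I$ the rule $\mathcal I(\tau)\colon\phi\mapsto\phi\circ\tau^n$ is a well-defined morphism of cubical sets $S_{I_m}^{\Box}(Q)\to S_{I_k}^{\Box}(Q)$, and that $\mathcal I$ is then a contravariant functor $\mathbf I\to\mathbf{Cub}$. The plan is to reduce both assertions to a couple of elementary identities satisfied by the coordinatewise maps $\tau^n\colon I_k^n\to I_m^n$ from Section~\ref{S3}, and then to invoke the explicit description of the cubical sets $S_{I_k}^{\Box}(Q)$ given in Proposition~\ref{p4.2}.

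First I would check that $\mathcal I(\tau)$ is well defined on underlying sets. Given a quiver map $\phi\colon I_m^n\to Q$, i.e.\ an element of $[S_{I_m}^{\Box}(Q)]_n$, the induced map $\tau^n\colon I_k^n\to I_m^n$ is a digraph map, hence a quiver map via $\mathbf D\subset\mathbf Q$, so the composite $\phi\circ\tau^n\colon I_k^n\to Q$ is again a quiver map, that is an element of $[S_{I_k}^{\Box}(Q)]_n$. Thus $\mathcal I(\tau)$ consists of set maps $[S_{I_m}^{\Box}(Q)]_n\to[S_{I_k}^{\Box}(Q)]_n$, $\phi\mapsto\phi\circ\tau^n$, for all $n\geq0$. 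Next I would verify that this family is a morphism of cubical sets by checking that it commutes with the face maps $\partial_i^{\alpha}$ and degeneracy maps $\varepsilon_i$. By Proposition~\ref{p4.2} these act by $\partial_i^{\alpha}(\psi)=\psi\,\delta_i^{\alpha}$ and $\varepsilon_i(\psi)=\psi\,\sigma_i$, while $\mathcal I(\tau)$ acts by right composition with $\tau^{\bullet}$; by associativity the whole verification then reduces to the two identities of cube-digraph maps
\[
\tau^{n}\circ\delta_i^{\alpha}=\delta_i^{\alpha}\circ\tau^{n-1}\qquad\text{and}\qquad\tau^{n-1}\circ\sigma_i=\sigma_i\circ\tau^{n}.
\]

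I would then prove these two identities from the vertex formulas~\eqref{3.1} and~\eqref{3.2} together with the coordinatewise action of $\tau^{\bullet}$. The projection identity is immediate, since deleting the $i$-th coordinate commutes with applying $\tau$ in each coordinate. For the face identity the two sides agree in every slot except the newly inserted $i$-th one, where they read $\tau(\alpha)$ and $\alpha$ respectively; so it comes down to the fact that $\tau$ carries the distinguished vertex $\alpha\in\{0,k\}$ of $I_k$ to the corresponding distinguished vertex of $I_m$ (under the labelling that identifies $\mathbb D_k$ and $\mathbb D_m$ with $\Box$, Proposition~\ref{p3.2}). This compatibility of $\tau$ with the cube structure is the only genuinely non-formal point, and I expect it to be the main obstacle; everything else in the reduction is routine bookkeeping with associativity. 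Granting it, $\mathcal I(\tau)$ commutes with every $\partial_i^{\alpha}$ and every $\varepsilon_i$ and hence is a morphism in $\mathbf{Cub}$.

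Finally I would check functoriality, which is purely formal. For $\tau=\operatorname{id}_{I_k}$ one has $\tau^n=\operatorname{id}_{I_k^n}$ for all $n$, so $\mathcal I(\operatorname{id}_{I_k})=\operatorname{id}$. For composable $\tau\colon I_k\to I_m$ and $\rho\colon I_m\to I_l$ the coordinatewise description yields $(\rho\circ\tau)^n=\rho^n\circ\tau^n$, whence for every $\phi\colon I_l^n\to Q$
\[
\mathcal I(\rho\circ\tau)[\phi]=\phi\circ(\rho\circ\tau)^n=(\phi\circ\rho^n)\circ\tau^n=\mathcal I(\tau)\bigl[\mathcal I(\rho)[\phi]\bigr]=\bigl(\mathcal I(\tau)\circ\mathcal I(\rho)\bigr)[\phi],
\]
i.e.\ $\mathcal I(\rho\circ\tau)=\mathcal I(\tau)\circ\mathcal I(\rho)$, the required contravariance. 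This completes the plan.
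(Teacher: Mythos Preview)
Your reduction is sound and considerably more careful than the paper's own argument, which is a single sentence recording that $\tau$ induces $\tau^n\colon I_k^n\to I_m^n$ and checks nothing further. You have correctly isolated the one nontrivial step: compatibility with the face maps amounts to the identity $\tau^n\circ\delta_i^{\alpha,k}=\delta_i^{\alpha,m}\circ\tau^{n-1}$, which in the inserted coordinate reads $\tau(0)=0$ and $\tau(k)=m$.

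The gap is that the ``fact'' you invoke at this point is not a fact. The category $\mathbf I$ is defined in Section~\ref{S3} as line digraphs together with \emph{all} digraph maps between them, and such maps need not carry endpoints to endpoints. Concretely, take $I=(0\to1)$, $I_2=(0\to1\to2)$, and the digraph map $\iota\colon I\to I_2$ given by $\iota(0)=0$, $\iota(1)=1$. For the $1$-cube $\phi=\operatorname{id}\colon I_2\to I_2$ one computes
\[
\partial_1^1\bigl(\mathcal I(\iota)[\phi]\bigr)=\phi(\iota(1))=1,
\qquad
\mathcal I(\iota)\bigl[\partial_1^1(\phi)\bigr]=\phi(2)=2,
\]
so $\mathcal I(\iota)$ fails to commute with $\partial_1^1$ and is not a morphism in $\mathbf{Cub}$. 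Thus your instinct that this is ``the main obstacle'' is exactly right, but it cannot be overcome in the generality stated: the proposition as written needs the additional hypothesis that $\tau$ sends $\{0,k\}$ to $\{0,m\}$, and with that hypothesis your argument goes through verbatim. The paper's proof does not address this point at all; note also that the inclusion $i\colon I\to I_k$ of Proposition~\ref{p4.9} has the same defect whenever $m\neq 0$ or $m+1\neq k$.
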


\begin{proof}
For $n\geq 0$, every digraph morphism $\tau\colon I_k\to I_m$ induces a digraph morphism 
$\tau^n\colon  I_k^n\to I_m^n$. 
\end{proof}

Note that for any set $K$ and a quiver  $Q$ we can define a quiver $K\times Q$ that is given by a disjoint union of  copies of the quiver   $Q$  enumerated by elements of $K$. 

Let  $f\colon Q\to Q^{\prime}$ be a map of quivers. 
We define a relation $R$ on $V\coprod V^{\prime}$ by the formula
$$  (v, v^{\prime})\in R \ \text{if}  \ f_V(v)=v^{\prime}.
$$
Let $\sim_f $  be the  equivalence relation  generated by the relation $R$, that is  $x\sim_f y$  if and only if there exist $n\geq 0$ and
$x_k$, $0 \leq k \leq n$, 
  such that $x=x_0$, $y=x_n$, and for every $1\leq k\leq n$, either $(x_{k-1},x_k)$ or $(x_k, x_{k-1})$ belongs to $ R$. In the same way we define on $\left[E\setminus  \left\{a\in E| f_E(a)=v^{\prime}\in V^{\prime}\right\}\right]\coprod E^{\prime}$
a relation $S$ by the formula
$$  (a, a^{\prime})\in S \  \ \text{if}  \ f_E(a)=a^{\prime}.
$$
Let $\sim_f $  be the  equivalence relation on $\left[E\setminus  \left\{a\in E| f_E(a)=v^{\prime}\in V^{\prime}\right\}\right]\coprod E^{\prime}$ generated by the relation $S$.

  Using the map $f$,  we define a quiver  $\sum_f=\sum_f(Q,Q^{\prime})=(V_{f}, E_{f}, s_{f}, t_{f})$ 
by setting   
$$
V_{f}=V\coprod V^{\prime}/\sim_f,  \ \   
E_{f}=\left[E\setminus  \left\{a\in E| f_E(a)=v^{\prime}\in V^{\prime}\right\}\right]\coprod E^{\prime}/\sim_f, 
$$ 
and the maps  $s_{f}, t_{f}$ are induced by the corresponding maps  in  $Q$  and  in $Q^{\prime}$. We call  the quiver $\sum_f$ by  the \emph{quotient quiver defined by the equivalence relation  $\sim_f$}.

 We note that the product $*\times G$ of a digraph $G$ with the one element set $*$ is naturally identified with $G$.  Let $K$ be a cubical set and  $x\in K_n$,   $y=\partial_i^{\alpha}(x)\in K_{n-1}$. We have the map 
$\Gamma(x, i, \alpha)\colon y \times I^{n-1}\to x\times I^n$  which is defined by  $\delta_i^{\alpha}\colon I^{n-1}\to I^{n}$. Thus we obtain a quotient quiver 
$
\sum_{\Gamma(x, i, \alpha)}(y\times I^{n-1}, x\times I^n)
$
which is defined by the equivalence relations  $\sim_{\Gamma(x, i, \alpha)}$. 
  For $y\in K_{n-1}$ and   $x=\varepsilon_i(y)\in K_{n}$, we have  the map   $\Theta(y, i)\colon x \times I^{n}\to y\times I^{n-1}$ which is given by the map $\sigma_i\colon I^{n}\to I^{n-1}$.  Thus we obtain a quotient quiver 
$
\sum_{\Theta(y, i)}(y\times I^{n-1},x\times  I^n)
$
which is defined by the equivalence relations  $\sim_{\Theta(y, i)}$. 

\begin{definition}\label{d4.5} \rm \cite{MurVer} A \emph{simple quiver  realization}  $|K|_{\bold Q}$ of a cubical set 
$K$ is the  quiver
$$
 |K|_{\bold Q}= \{\coprod_n K_n\times I^n\}/\sim 
$$
where the equivalence relation is generated by equivalence relations $\sim_{\Gamma( x,i,\alpha)}$ and  $\sim_{\Theta(y,i)}$ defined above. 
\end{definition}

For a line digraph $I_k\, (k\geq 1)$ we can define the equivalences relations $\sim_{\Gamma( x,i,\alpha)}$ and $\sim_{\Theta(y,i)}$ on the sub-quivers  of $\coprod_n K_n\times I^n_k$ 
similarly to  the case $I=I_1$ above. 

\begin{definition}\label{d4.6} \rm  Let $I_k$ be a line digraph with $k\geq 1$. An \emph{$I_k$-quiver realization}  $|K|^{I_k}_{\bold Q}$ of a cubical set 
$K\colon \Box^{op}\to \bold{Set}$ is the  quiver 
$$
 |K|_{\bold Q}^{I_k}= \{\coprod_n K_n\times I^n_k\}/\sim 
$$
where the  equivalence relation is generated by equivalence relations $\sim_{\Gamma( x,i,\alpha)}$ and  $\sim_{\Theta(y,i)}$  for the sub-quivers of $\coprod_n K_n\times I^n_k$. 
\end{definition}

Note that it follows directly from these definitions that  $|K|_{\bold Q}^{I_1}\cong  |K|_{\bold Q}$ for any line digraph $I_1$ which can be equal to $0\to 1$ or $0\leftarrow 1$.

\begin{proposition}\label{p4.7} The quiver realizations $|\ |_{\bold Q}$ and $|\ |^{I_k}_{\bold Q}$ are functors from the category $\bold{Cub}$ of cubical sets to the category 
$\bold Q$ of quivers.
\end{proposition}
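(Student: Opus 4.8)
The object part of both functors is provided by Definitions \ref{d4.5} and \ref{d4.6}, and since $|K|_{\bold Q}^{I_1}\cong|K|_{\bold Q}$ it suffices to treat $|\ |_{\bold Q}^{I_k}$ for a fixed line digraph $I_k$ with $k\geq 1$. The plan is: first, for a morphism $\psi\colon K\to K^{\prime}$ of cubical sets, produce a map before passing to the quotient; second, show that it descends to a quiver morphism $|\psi|_{\bold Q}^{I_k}$ of the realizations; third, check that identities and composition are preserved.

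A morphism $\psi$ of cubical sets is a family of maps $\psi_n\colon K_n\to K^{\prime}_n$ commuting with all face maps $\partial_i^{\alpha}$ and degeneracy maps $\varepsilon_i$. I would set $\widetilde{\psi}=\coprod_n\bigl(\psi_n\times\operatorname{id}_{I_k^n}\bigr)\colon\coprod_n K_n\times I_k^n\to\coprod_n K^{\prime}_n\times I_k^n$; this sends the copy of $I_k^n$ indexed by $x\in K_n$ identically onto the copy indexed by $\psi_n(x)$, so it is a digraph map, non-degenerate on every arrow, and it commutes with all the face inclusions $\delta_i^{\alpha}$ and projections $\sigma_i$.

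Next I would verify that $\widetilde{\psi}$ carries each generating relation of Definition \ref{d4.6} onto a generating relation. For $x\in K_n$ with $y=\partial_i^{\alpha}(x)$, the identifications forming $\sim_{\Gamma(x,i,\alpha)}$ are carried, using $\psi_{n-1}\bigl(\partial_i^{\alpha}(x)\bigr)=\partial_i^{\alpha}\bigl(\psi_n(x)\bigr)$ together with the compatibility of $\widetilde{\psi}$ with $\delta_i^{\alpha}$, onto those forming $\sim_{\Gamma(\psi_n(x),i,\alpha)}$; and for $y\in K_{n-1}$ with $x=\varepsilon_i(y)$, the relation $\sim_{\Theta(y,i)}$ --- which deletes the arrows of the copy indexed by $x$ lying in the $i$-th coordinate direction and glues the remaining ones onto the copy indexed by $y$ via $\sigma_i$ --- is matched with $\sim_{\Theta(\psi_{n-1}(y),i)}$ using $\psi_n\bigl(\varepsilon_i(y)\bigr)=\varepsilon_i\bigl(\psi_{n-1}(y)\bigr)$ and the compatibility of $\widetilde{\psi}$ with $\sigma_i$. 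Hence $\widetilde{\psi}$ respects the equivalence relations generated by these families and descends to a well-defined map $|\psi|_{\bold Q}^{I_k}\colon|K|_{\bold Q}^{I_k}\to|K^{\prime}|_{\bold Q}^{I_k}$. To see that this descended map is a morphism in $\bold Q$, note that an arrow of $|K|_{\bold Q}^{I_k}$ is represented by an arrow $\bar a$ of $\coprod_n K_n\times I_k^n$ surviving in the quotient, lying in the copy indexed by some $x\in K_n$, and its image is the class of $\widetilde{\psi}(\bar a)$ in the copy indexed by $\psi_n(x)$; if this latter arrow survives in $|K^{\prime}|_{\bold Q}^{I_k}$ then, since $\widetilde{\psi}$ commutes with source and target and the quotient quivers carry the induced structure maps, condition (i) of Definition \ref{d3.1} holds, while if it is deleted by some $\sim_{\Theta}$ then the classes of its two endpoints coincide in $|K^{\prime}|_{\bold Q}^{I_k}$, so $|\psi|_{\bold Q}^{I_k}$ sends $\bar a$ to a vertex in accordance with condition (ii).

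Finally, functoriality is formal: before quotienting $\widetilde{\operatorname{id}_K}=\operatorname{id}$ and $\widetilde{\psi^{\prime}\circ\psi}=\widetilde{\psi^{\prime}}\circ\widetilde{\psi}$, and these equalities descend to the quotient quivers, yielding $|\operatorname{id}_K|_{\bold Q}^{I_k}=\operatorname{id}$ and $|\psi^{\prime}\circ\psi|_{\bold Q}^{I_k}=|\psi^{\prime}|_{\bold Q}^{I_k}\circ|\psi|_{\bold Q}^{I_k}$; the case $k=1$ then covers $|\ |_{\bold Q}$. I expect the main obstacle to be the second step: one must keep careful track of how the arrows of $\coprod_n K_n\times I_k^n$ are deleted and glued by the generating relations on the two sides, so as to be certain that the induced map is a quiver morphism in the precise sense of Definition \ref{d3.1} --- in particular that an arrow sent onto a collapsed one has its two endpoints identified --- and not merely a map of the underlying sets of vertices and arrows. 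Once the behaviour of $\widetilde{\psi}$ on these relations is understood, everything else, including functoriality, is automatic.
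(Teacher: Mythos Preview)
The paper states Proposition~\ref{p4.7} without proof; it is treated as a routine fact. Your argument is the standard verification that a realization construction is functorial: lift a morphism $\psi$ of cubical sets to $\widetilde{\psi}=\coprod_n(\psi_n\times\operatorname{id})$ on the disjoint union, use naturality of $\psi$ with respect to $\partial_i^{\alpha}$ and $\varepsilon_i$ to see that the generating relations $\sim_{\Gamma}$ and $\sim_{\Theta}$ are preserved, and conclude that $\widetilde{\psi}$ descends. This is correct and is exactly the argument one would supply to justify the paper's unproved assertion.

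Your care with Definition~\ref{d3.1} is the one point that goes beyond the purely formal. You correctly isolate the situation where a surviving arrow of $|K|_{\bold Q}^{I_k}$ is carried by $\widetilde{\psi}$ to an arrow that gets collapsed on the $K'$ side (this occurs precisely when $\psi_n(x)$ is degenerate in a direction in which $x$ need not be), and you observe that in that case the two endpoints become identified in $|K'|_{\bold Q}^{I_k}$, so condition (ii) applies. One small clarification you might add: an arrow in the quotient can be collapsed only through a $\Theta$-type relation (the $\Gamma$-relations come from injective face maps $\delta_i^{\alpha}$ and never send an arrow to a vertex), so your case analysis is exhaustive. With that remark your argument is complete; the functoriality step is, as you say, formal.
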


For a quiver $Q$, let $S^{\Box}_{\bold Q}(Q)=S^{\Box}_{I_1}(Q)$ where $I_1=(0\to 1)$. 
For two quivers $Q, Q^{\prime}$, we denote by $\mathbf Q(Q, Q^{\prime})$ the set of maps from 
$Q$ to $Q^{\prime}$ in the category $\mathbf Q$. For two cubical sets $K, K^{\prime}$, we denote by 
$\mathbf{Cub}(K, K^{\prime})$ the set of maps from  $K$ to   $K^{\prime}$ in the category 
$\mathbf{Cub}$. 

\begin{theorem}\label{t4.8} The quiver realization functors  $|\ |_{\bold Q}$ and $|\ |^{I_k}_{\bold Q}$ are left adjoint to the singular cubical  functors $S^{\Box}_{\bold Q}$ and
$S^{\Box}_{I_k}$, respectively. This means that for every cubical set $K$ and every  quiver $Q$ there are natural bijections
\begin{equation}\label{4.1}
E\colon \bold Q(|K|_{\bold Q}, Q)\to \bold{Cub}\left(K, S^{\Box}_{\bold Q}(Q)\right)
\end{equation}
and 
\begin{equation}\label{4.2}
E_{I_k}\colon \bold Q(|K|_{\bold Q}^{I_k}, Q)\to \bold{Cub}\left(K, S^{\Box}_{I_k}(Q)\right).
\end{equation}
\end{theorem}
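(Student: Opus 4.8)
The plan is to establish the bijection \eqref{4.2} directly from the universal property of the quotient quiver, and then to obtain \eqref{4.1} as the special case $k=1$, using the identifications $|K|_{\bold Q}^{I_1}\cong |K|_{\bold Q}$ and $S^{\Box}_{\bold Q}=S^{\Box}_{I_1}$ noted after Definitions \ref{d4.5} and \ref{d4.6}. So fix $I_k$ with $k\geq 1$.

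First I would describe quiver maps out of the coproduct $\coprod_n K_n\times I^n_k$. Since $K_n\times I^n_k$ is, by construction, the disjoint union of copies of $I^n_k$ indexed by the set $K_n$, a quiver map $g\colon\coprod_n K_n\times I^n_k\to Q$ is the same as a family of functions of sets
$$
\Phi_n\colon K_n\longrightarrow \bold Q\bigl(I^n_k,Q\bigr)=\bigl[S^{\Box}_{I_k}(Q)\bigr]_n ,\qquad n\geq 0,
$$
where $\Phi_n(x)\colon I^n_k\to Q$ is the restriction of $g$ to the copy of $I^n_k$ labelled by $x\in K_n$. This uses only the universal property of the coproduct in $\bold Q$ together with the definition of $S^{\Box}_{I_k}(Q)$ from Proposition \ref{p4.2}.

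Next I would pass to the quotient. By the universal property of the quotient quiver (Definitions \ref{d4.5}, \ref{d4.6}), a quiver map $|K|^{I_k}_{\bold Q}\to Q$ is exactly a quiver map $g\colon\coprod_n K_n\times I^n_k\to Q$ that factors through the quotient, i.e.\ that is constant (on vertices and on arrows) along the equivalence relation generated by all $\sim_{\Gamma(x,i,\alpha)}$ and $\sim_{\Theta(y,i)}$. It then remains to check that, under the correspondence $g\leftrightarrow(\Phi_n)$ above, this property is equivalent to $(\Phi_n)$ being a morphism of cubical sets $K\to S^{\Box}_{I_k}(Q)$. For $x\in K_n$ and $y=\partial_i^{\alpha}(x)$, the map $\Gamma(x,i,\alpha)\colon y\times I^{n-1}_k\to x\times I^n_k$ is induced by the (injective) face inclusion $\delta^{\alpha}_i\colon I^{n-1}_k\to I^n_k$; hence $g$ is constant along $\sim_{\Gamma(x,i,\alpha)}$ precisely when $\Phi_{n-1}(y)=\Phi_n(x)\circ\delta^{\alpha}_i=\partial_i^{\alpha}\bigl(\Phi_n(x)\bigr)$, that is, when $\Phi_{n-1}\circ\partial_i^{\alpha}=\partial_i^{\alpha}\circ\Phi_n$. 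Symmetrically, $\Theta(y,i)$ is induced by the projection $\sigma_i$, and being constant along all $\sim_{\Theta(y,i)}$ amounts to each $\Phi_n(\varepsilon_i y)$ factoring through $\sigma_i$ via $\Phi_{n-1}(y)$, i.e.\ to $\Phi_n\circ\varepsilon_i=\varepsilon_i\circ\Phi_{n-1}$. Thus $g$ factors through $|K|^{I_k}_{\bold Q}$ if and only if the family $(\Phi_n)$ commutes with all face and degeneracy maps, i.e.\ is a morphism in $\bold{Cub}$. Putting $E_{I_k}(g)=(\Phi_n)$ then gives the announced bijection, and $E$ is its instance for $k=1$.

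Finally I would verify naturality of $E_{I_k}$ in $K$ and in $Q$: a cubical map $K\to K'$ makes both sides of \eqref{4.2} change by precomposition with the induced maps, a quiver map $Q\to Q'$ makes both change by postcomposition, and the assignment $g\mapsto(\Phi_n)$ is visibly compatible with both, since the functors $|\ |^{I_k}_{\bold Q}$ and $S^{\Box}_{I_k}$ are assembled from the same coproduct-and-quotient data (Propositions \ref{p4.2} and \ref{p4.7}). I expect the only real work to be the bookkeeping in the previous paragraph: because a quiver morphism may collapse an arrow to a vertex, each relation $\sim_f$ is defined separately on vertices and on the edge set with the collapsing arrows removed, so the matching of $\sim_{\Gamma(x,i,\alpha)}$ and $\sim_{\Theta(y,i)}$ with the cubical identities \eqref{2.3} has to be carried out on vertices and on arrows in parallel, with particular attention to the non-injective projections $\sigma_i$. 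Everything else is formal; in fact the statement is an instance of the nerve--realization adjunction attached to the functor $\Box\to\bold Q$, $\Bbb I^n\mapsto I^n_k$, with $|\ |^{I_k}_{\bold Q}$ its left Kan extension along the Yoneda embedding $\Box\hookrightarrow\bold{Cub}$.
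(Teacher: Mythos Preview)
Your proposal is correct and is essentially the same argument as the paper's: both construct the bijection by sending a quiver map $f\colon |K|^{I_k}_{\bold Q}\to Q$ to the family $\Phi_n(x)=f(x,\,\cdot\,)\colon I_k^n\to Q$ and checking that the quotient relations $\sim_{\Gamma(x,i,\alpha)}$, $\sim_{\Theta(y,i)}$ match the cubical identities for $(\Phi_n)$. The only cosmetic differences are that the paper proves \eqref{4.1} first and declares \eqref{4.2} ``similar'' (you do the reverse, deducing \eqref{4.1} as the case $k=1$), and that the paper writes down $E$ and $E^{-1}$ by explicit formulas on vertices and arrows rather than invoking the coproduct/quotient universal properties; your presentation via the nerve--realization framework is a bit more systematic but not a different proof.
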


\begin{proof} It is sufficient to prove (\ref{4.1}) since the proof of (\ref{4.2}) is similar. 
Let $f\colon |K|_{\bf Q}\to Q$ be a quiver map. Define the cubical map $E(f)\colon K\to S^{\Box}_{\bold Q}(Q)$  as follows. For every element  $x\in K_n\ (n\geq 0)$ we define the singular 
 cube $E(f)(x)\colon I^n\to Q$ for the quiver $Q$ by setting 
$$
[E(f)(x)](v)=f(x, v) \ \text{where} \  x\in K_n, v\in V_{I^n}  
$$
and 
$$
[E(f)(x)](a)=f(x, a) \ \text{where} \  x\in K_n, a\in E_{I^n}  
$$
The map $E$ is well defined and has an inverse map $E^{-1}$ which is defined as follows. 
Let $g\colon K\to S^{\Box}_{\mathbf Q}(Q)$ be a cubical map. For every vertex 
$(x, v)\in |K|_{\bold Q}$ where $x\in K_n$, $v\in V_{I^n}$ we set 
$
[E^{-1}(g)](x,v)=[g(x)](v)
$ and for every arrow 
$(x, a)\in |K|_{\bold Q}$ where $x\in K_n$, $a\in E_{I^n}$ we set 
$
[E^{-1}(g)](x,a)=[g(x)](a)$.
\end{proof}

Let us consider  the line digraph $I=I_1=(0\to 1)$ and a line digraph $I_k$ with $k\geq 1$. 
 Let $m,m+1\in V_{I_k}$   where $0\leq m<m+1\leq k$ be a pair of consecutive vertices.  Depending of the arrow between 
these vertices in $I_k$,  we have  the inclusion map  $i\colon I\to I_k$ given on the set of vertices by $i(0)=m, i(1)=m+1$ or 
$i(0)=m+1, i(1)=m-1$. Depending of this  inclusion,  define the projection $p\colon I_k\to I$ on the set of vertices $v\in V_{I_k}$
by  setting
$
 p(v)=\begin{cases} 0 & \text{for}\  v\leq m\\
                                                                        1 & \text{for} \ v\geq m+1\\
\end{cases}
$
for the first  case and 
$
 p(v)=\begin{cases} 1 & \text{for}\  v\leq m\\
                                                                      0 & \text{for} \ v\geq m+1\\
\end{cases}
$
for the second case, respectively.   For any $n\geq 1$ these maps 
induce the digraph maps 
\begin{equation*}\label{}
p^n\colon I_k^n\to I^n \ \ \text{and} \ \ i^n\colon I^n\to I_k^n
\end{equation*}
and we define the morphisms  $p^0$ and $i^0$ as   identity map $0\to 0$. It follows immediately from the definition that $p^ni^n=\operatorname{Id}$.

\begin{proposition}\label{p4.9} For any  quiver  $Q$,  the digraph maps $p$ and $i$ induce the following maps of singular  cubical sets 
$$
 p^{\Box}\colon S^{\Box}_{\bold{Q}}(Q)\to S^{\Box}_{I_k}(Q) \ \
\text{and} \ \ \
i^{\Box}\colon S^{\Box}_{I_k}(G)\to S^{\Box}_{\bold Q}(G)
$$ 
such that  $i^{\Box}p^{\Box}=\operatorname{Id}\colon S^{\Box}_{\bold{Q}}(Q)\to S^{\Box}_{\bold{Q}}(Q)$.
\end{proposition}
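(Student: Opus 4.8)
The plan is to identify $p^{\Box}$ and $i^{\Box}$ with the maps on singular cubical sets induced by the digraph morphisms $i\colon I\to I_k$ and $p\colon I_k\to I$ in the sense of Proposition \ref{p4.4}, and then to read off $i^{\Box}p^{\Box}=\operatorname{Id}$ from the relation $p^ni^n=\operatorname{Id}$ recorded just before the statement, either by a one-line computation or by contravariant functoriality.

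Concretely, for a singular $n$-cube $\psi\colon I^n\to Q$ of $S^{\Box}_{\mathbf Q}(Q)=S^{\Box}_{I_1}(Q)$ I would set $p^{\Box}(\psi)=\psi\circ p^n\colon I_k^n\to Q$, which is a singular $n$-cube of $S^{\Box}_{I_k}(Q)$, and for a singular $n$-cube $\varphi\colon I_k^n\to Q$ of $S^{\Box}_{I_k}(Q)$ I would set $i^{\Box}(\varphi)=\varphi\circ i^n\colon I^n\to Q$, a singular $n$-cube of $S^{\Box}_{\mathbf Q}(Q)$. These are exactly the images $\mathcal I(p)$ and $\mathcal I(i)$ of the functor $\mathcal I$ of Proposition \ref{p4.4} (for the fixed quiver $Q$), so each of $p^{\Box}$, $i^{\Box}$ is a morphism in $\mathbf{Cub}$; alternatively, since $\partial_i^{\alpha}(\phi)=\phi\delta_i^{\alpha}$ and $\varepsilon_i(\phi)=\phi\sigma_i$ by Proposition \ref{p4.2}, one checks directly that precomposition with $p^n$ and with $i^n$ commutes with all face maps $\partial_i^{\alpha}$ and degeneracies $\varepsilon_i$, because $p^n$ and $i^n$ act on vertices and arrows coordinatewise while $\delta_i^{\alpha}$ and $\sigma_i$ only affect a single coordinate, as is visible from (\ref{3.1}) and (\ref{3.2}).

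For the composite I would use that precomposition with induced cube maps is contravariantly functorial in the line digraph: for composable digraph maps $I_k\xrightarrow{\tau}I_m\xrightarrow{\rho}I_l$ one has $(\rho\tau)^n=\rho^n\tau^n$, whence $(\rho\tau)^{\Box}=\tau^{\Box}\rho^{\Box}$ and $\bigl(\operatorname{Id}_{I_k}\bigr)^{\Box}=\operatorname{Id}$. Applying this to $I\xrightarrow{i}I_k\xrightarrow{p}I$ with $p\circ i=\operatorname{Id}_I$ gives $i^{\Box}p^{\Box}=(p\circ i)^{\Box}=\bigl(\operatorname{Id}_I\bigr)^{\Box}=\operatorname{Id}\colon S^{\Box}_{\mathbf Q}(Q)\to S^{\Box}_{\mathbf Q}(Q)$. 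Equivalently and more explicitly, for every $n\ge 0$ and every $\psi\colon I^n\to Q$ one computes
\[
i^{\Box}\bigl(p^{\Box}(\psi)\bigr)=i^{\Box}(\psi\circ p^n)=\psi\circ p^n\circ i^n=\psi\circ(p^ni^n)=\psi ,
\]
so that $i^{\Box}p^{\Box}$ is dimensionwise the identity, hence the identity morphism.

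I do not expect a genuine obstacle here: once Proposition \ref{p4.4} is in hand the statement is essentially formal. The only points needing care are the bookkeeping of variances --- the digraph map $p\colon I_k\to I$ induces $p^{\Box}$ \emph{landing in} $S^{\Box}_{I_k}(Q)$, while $i\colon I\to I_k$ induces $i^{\Box}$ \emph{going out of} $S^{\Box}_{I_k}(Q)$ --- and the routine verification that these precomposition maps respect faces and degeneracies, for which one only needs that $p^n$ and $i^n$ are coordinatewise and that $\delta_i^{\alpha}$, $\sigma_i$ touch one coordinate at a time.
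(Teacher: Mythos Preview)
Your proof is correct and follows the same approach as the paper, which simply notes that the result follows from the definitions of the singular cubical sets and from the relation $p^n i^n=\operatorname{Id}$; you have just spelled out these details explicitly, invoking Proposition~\ref{p4.4} to identify $p^{\Box}$ and $i^{\Box}$ as morphisms of cubical sets.
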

\begin{proof} It follows from  the definitions of the  singular cubical sets  and from the relation  $p^ni^n=\operatorname{Id}\colon I^n\to I^n$ for $n\geq 0, k\geq 1$.
\end{proof}

\begin{definition}\label{d4.10} \rm  A cubical set $K$ is \emph{simple} if the following two conditions are satisfied: 

 i) $\partial_1^1(k)\ne \partial_1^0(k)$ for every cube $k\in K_1$,

2)   ordered pairs  $(\partial_1^1(k), \partial_1^0(k))$ and   $(\partial_1^1(k^{\prime}), \partial_1^0(k^{\prime}))$ are different for every two different 
cubes $k, k^{\prime}\in K_1$. 
\end{definition}

\begin{definition}\label{d4.11} \rm \cite{Jardine} For a cubical set $K$ and $q\geq 0$  we define the \emph{$q$-skeleton}
 $\operatorname{sk}_q(K)$ 
of $K$ as the cubical set which is generated by the $n$-cubes of $K_n$ for $0\leq n\leq q$. 
\end{definition} 

Note that we have the filtration 
\begin{equation}\label{4.3}
\operatorname{sk}_0(K)\subset  \dots \subset \operatorname{sk}_q(K)\subset \dots \subset K
\end{equation}
  and the cubical set 
$\operatorname{sk}_q(K)$ has no non-degenerate cubes in dimensions greater then $q$. Thus for any quiver $Q$ we have the following filtration 
\begin{equation}\label{4.4}
\operatorname{sk}_0\left( S^{\Box}_{I_k}(Q)\right)\subset  \dots \subset \operatorname{sk}_q\left(S^{\Box}_{I_k}(Q)\right)\subset \dots \subset S^{\Box}_{I_k}(Q)
\end{equation}
of the cubical set $S^{\Box}_{I_k}(Q)$ where $\operatorname{sk}_q\left(S^{\Box}_{I_k}(Q)\right)$ is generated by non-degenerate singular cubes with dimension less or equal to $q$.  

\begin{theorem} \label{t4.12} For any cubical set  $K$ the quiver  realization 
$\left|K\right|_{\mathbf Q}$ 
coincides with the quiver realization   $\left|\operatorname{sk}_1 (K)\right|_{\mathbf Q}$
of the one-dimensional skeleton  of the cubical set $K$.
\end{theorem}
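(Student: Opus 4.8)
The plan is to prove that the quiver morphism
$j\colon \left|\operatorname{sk}_1(K)\right|_{\mathbf Q}\to\left|K\right|_{\mathbf Q}$
induced (Proposition \ref{p4.7}) by the inclusion $\operatorname{sk}_1(K)\hookrightarrow K$ is an isomorphism of quivers; this is the content of ``coincides''. Everything rests on the fact that a quiver is one-dimensional: although the coproduct $\coprod_n K_n\times I^n$ of Definition \ref{d4.5} runs over all $n$, each cube $I^n$ has no cells of dimension $>1$, and by Proposition \ref{p2.2} (together with the identification $\Bbb D_1\cong\Box$ of Proposition \ref{p3.2}) every vertex $v=(v_1,\dots,v_n)$ of $I^n$ is the image of a \emph{unique} normal-form composition of face inclusions $\nu_v\colon I^0\to I^n$, while every arrow $a$ of $I^n$, say in the $i$-th coordinate direction with endpoints $v\to v'$, is the image of a unique normal-form composition of face inclusions $\mu_a\colon I^1\to I^n$, with $\nu_v=\mu_a\circ\delta^0_1$ and $\nu_{v'}=\mu_a\circ\delta^1_1$; recall also $K(\delta_i^{\alpha})=\partial_i^{\alpha}$, $K(\sigma_i)=\varepsilon_i$ and $(\operatorname{sk}_1 K)_0=K_0$, $(\operatorname{sk}_1 K)_1=K_1$. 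As a warm-up this already makes $j$ surjective: given a vertex $[(x,v)]$ of $\left|K\right|_{\mathbf Q}$ with $x\in K_n$, applying the relation $\sim_{\Gamma}$ successively along the factorization of $\nu_v$ identifies $(x,v)$ with $(x',0)$ where $x'=K(\nu_v)(x)\in K_0$, and the same computation with $\mu_a$ in place of $\nu_v$ identifies an arrow $[(x,a)]$ with $[(k,e)]$ where $k=K(\mu_a)(x)\in K_1$ and $e$ is the unique arrow of $I^1$; both $(x',0)$ and $(k,e)$ lie in the image of $j$.

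The real point is injectivity, which I would deduce from the adjunction of Theorem \ref{t4.8} rather than from a direct analysis of identifications. It suffices to show that for every quiver $Q$ the restriction map
$r_Q\colon \mathbf{Cub}\!\left(K,S^{\Box}_{\mathbf Q}(Q)\right)\to\mathbf{Cub}\!\left(\operatorname{sk}_1(K),S^{\Box}_{\mathbf Q}(Q)\right)$
is a bijection; indeed, by naturality of the adjunction isomorphisms $\mathbf Q(\left|K\right|_{\mathbf Q},Q)\cong\mathbf{Cub}(K,S^{\Box}_{\mathbf Q}(Q))$ and $\mathbf Q(\left|\operatorname{sk}_1(K)\right|_{\mathbf Q},Q)\cong\mathbf{Cub}(\operatorname{sk}_1(K),S^{\Box}_{\mathbf Q}(Q))$ in $K$, the map $r_Q$ corresponds under these bijections to precomposition with $j$, so if $r_Q$ is a bijection for all $Q$ then $f\mapsto f\circ j$ is a natural bijection and the Yoneda lemma forces $j$ to be an isomorphism.

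A cubical map $g\colon\operatorname{sk}_1(K)\to S^{\Box}_{\mathbf Q}(Q)$ is exactly a choice of a vertex $g(x)\in V_Q$ for each $x\in K_0$ and a singular cube $g(k)\colon I^1\to Q$ for each $k\in K_1$, subject to the face/degeneracy relations \eqref{2.3}. Any extension $\widehat g$ of $g$ to $K$ must satisfy, for $x\in K_n$,
$[\widehat g(x)](v)=g\!\left(K(\nu_v)(x)\right)(0)$ on vertices and $[\widehat g(x)](a)=g\!\left(K(\mu_a)(x)\right)(e)$ on arrows,
by compatibility with face maps, so $\widehat g$ is unique and $r_Q$ is injective. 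For surjectivity of $r_Q$ one must verify that this formula genuinely defines a singular cube $\widehat g(x)\colon I^n\to Q$ (a quiver map): for an arrow $a\colon v\to v'$ of $I^n$, writing $k:=K(\mu_a)(x)\in K_1$ and using $\nu_v=\mu_a\delta^0_1$, $\nu_{v'}=\mu_a\delta^1_1$ with the cubical identities for $g$, the triple $\bigl([\widehat g(x)](v),[\widehat g(x)](v'),[\widehat g(x)](a)\bigr)$ equals $\bigl(g(k)(0),g(k)(1),g(k)(e)\bigr)$, so the defining alternative for a quiver map at $a$ is literally the one holding for the quiver map $g(k)$ at $e$. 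One then checks that $x\mapsto\widehat g(x)$ commutes with all face and degeneracy maps (using $\delta_i^{\alpha}\nu_v=\nu_{\delta_i^{\alpha}(v)}$, $\sigma_i\nu_v=\nu_{\sigma_i(v)}$ and the edge analogues), that $\widehat g$ restricts to $g$ on $\operatorname{sk}_1(K)$, and that $r_Q$ is natural in $Q$; all of this is formal.

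The only place requiring genuine work is that last verification, i.e. confirming that the forced formula produces a legitimate quiver map $I^n\to Q$ and is functorial in the cubical variable; this is a bookkeeping exercise with the face/projection combinatorics of Proposition \ref{p2.2}, and is where I expect the write-up to be longest. If one wishes to avoid Theorem \ref{t4.8}, injectivity of $j$ can alternatively be proved directly by showing that any finite chain of elementary identifications $\sim_{\Gamma}$, $\sim_{\Theta}$ in $\coprod_n K_n\times I^n$ joining two cells of dimension $\le1$ can be rewritten as a chain passing only through cells of dimension $\le1$, by pushing every excursion into dimension $\ge2$ back down along face relations; I expect this combinatorial normalization to be the more delicate of the two routes.
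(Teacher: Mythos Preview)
Your proposal is correct, but it takes a genuinely different route from the paper. The paper argues directly by induction on the skeletal filtration \eqref{4.3}: the key observation is that for $q\ge 2$ the \emph{digraph} $I^q$ is already the union of its codimension-one faces $\delta_i^{\alpha}(I^{q-1})$, so every vertex and every arrow of $x\times I^q$ (for a non-degenerate $x\in K_q$) is identified via $\sim_{\Gamma(x,i,\alpha)}$ with something in $\partial_i^{\alpha}(x)\times I^{q-1}$, and hence the passage from $\left|\operatorname{sk}_{q-1}(K)\right|_{\mathbf Q}$ to $\left|\operatorname{sk}_q(K)\right|_{\mathbf Q}$ adds nothing. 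This is short and geometric, but as written it is really a surjectivity argument; the fact that no \emph{new} identifications are created among cells of the $(q-1)$-skeleton is implicit in the cubical identities and is not spelled out.

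Your approach instead leverages the adjunction of Theorem \ref{t4.8}: you show that restriction $\mathbf{Cub}(K,S^{\Box}_{\mathbf Q}(Q))\to\mathbf{Cub}(\operatorname{sk}_1(K),S^{\Box}_{\mathbf Q}(Q))$ is a bijection for every $Q$ and conclude by Yoneda. This is more categorical and, in a sense, handles injectivity of $j$ more transparently than the paper does, at the cost of importing Theorem \ref{t4.8} and deferring the work to the verification that the forced formula $\widehat g$ is a cubical map. Both routes ultimately rest on the same combinatorial fact---each vertex and arrow of $I^n$ is hit by a composite of face inclusions from $I^0$ or $I^1$---but the paper exploits it pointwise inside the quotient, whereas you exploit it through the represented functor.
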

\begin{proof} It follows  from  Definition \ref{d4.5} that the quiver realization 
$|K|_{\bold Q}$ is defined by the set of non-degnerate cubes of $K$.   The cubical set 
$\operatorname{sk}_q (K)$ contains only degenerate cubes in dimensions greater then $q$.  
Let us consider  the skeleton filtration of $K$ given by  (\ref{4.3}) and let us apply induction on  $q$ the number of this filtration.  If the cubical set $K$ has non-degenerate cubes only in dimensions  zero  and one 
then $K=\operatorname{sk}_1(K)$ and it is nothing to prove.   
Let us look at  the  induction step. We have 
$|\operatorname{sk}_{q-1}(K)|_{\bold Q}\subset |\operatorname{sk}_{q}(K)|_{\bold Q}$. For every non-dgenerate cube $x=k_q\in \operatorname{sk}_{q}(K)$ and for every  
$k_{q-1}=y=\partial_i^{\alpha}(x)\in \operatorname{sk}_{q-1}(K)$  
we have the map $\Gamma(x,i, \alpha)\colon y\times I^{q-1}\to x\times I^q$ which 
is defined by the inclusion $\delta_i^{\alpha}\colon I^{q-1}\to I^q$ of the face of the cube $I^q$. Every $y=\partial_i^{\alpha}(x)\in\operatorname{sk}_{q-1}(K)$ defines an inclusion  $\Gamma(x,i, \alpha)$ and  the cube $I^q$ is the union of its $(q-1)$-faces.  
Hence, by the definition of the equivalence relation $\sim_{\Gamma(x,i, \alpha)}$  we obtain 
that the quiver realization of $\left|\operatorname{sk}_{q}(K)\right|_{\bold Q}$ coincides with the quiver realization of $\left|\operatorname{sk}_{q-1}(K)\right|_{\bold Q}$ 
which coincides with  $\left|\operatorname{sk}_1 (K)\right|_{\mathbf Q}$ by the  inductive assumption. 
\end{proof}

\begin{corollary} \label{c4.13} For any quiver $Q$ the quiver  realization 
$\left|S^{\Box}_{I_k}(Q) \right|_{\mathbf Q}$
of the  cubical set  $S^{\Box}_{I_k}(Q)$ 
coincides with the quiver realization   $\left|\operatorname{sk}_1\left( S^{\Box}_{I_k}(Q)\right)\right|_{\mathbf Q}$
of the one-dimensional skeleton  of  $S^{\Box}_{I_k}(Q)$.
\end{corollary}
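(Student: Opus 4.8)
The plan is to obtain this as a direct specialization of Theorem~\ref{t4.12}. First I would note that, by Proposition~\ref{p4.2}, the collection $S^{\Box}_{I_k}(Q)$ of singular cubes $\phi\colon I_k^n\to Q$ with the face morphisms $\partial_i^{\alpha}$ and degeneracy morphisms $\varepsilon_i$ is a bona fide cubical set, i.e.\ a functor $\Box^{op}\to\bold{Set}$. Hence it is an admissible input for the realization functor $|\ |_{\bold Q}$ of Definition~\ref{d4.5} and for Theorem~\ref{t4.12}.

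Then I would simply apply Theorem~\ref{t4.12} with $K=S^{\Box}_{I_k}(Q)$. The filtration (\ref{4.4}) of $S^{\Box}_{I_k}(Q)$ by the subcomplexes $\operatorname{sk}_q(S^{\Box}_{I_k}(Q))$, generated by the non-degenerate singular cubes of dimension at most $q$, is precisely the skeleton filtration (\ref{4.3}) specialized to this cubical set; so Theorem~\ref{t4.12} yields $|S^{\Box}_{I_k}(Q)|_{\bold Q}\cong|\operatorname{sk}_1(S^{\Box}_{I_k}(Q))|_{\bold Q}$, which is the assertion. If one prefers a self-contained treatment, one can rerun the induction on the skeleton number $q$ from the proof of Theorem~\ref{t4.12}: for $q\geq 2$ every non-degenerate singular $q$-cube $x$ is attached to $\operatorname{sk}_{q-1}(S^{\Box}_{I_k}(Q))$ along the union of its $(q-1)$-faces $\partial_i^{\alpha}(x)$ via the gluing maps $\Gamma(x,i,\alpha)$, so passing to $\operatorname{sk}_q$ adds no new vertices or arrows to the quotient quiver, and the realization stabilizes already at level one.

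I do not expect any genuine obstacle here: all the work is done by Theorem~\ref{t4.12}, and the only thing to verify is the formal point that $S^{\Box}_{I_k}(Q)$ is a cubical set, which is already recorded in Proposition~\ref{p4.2}. The one place to be mildly careful is notational: the realization $|\ |_{\bold Q}=|\ |^{I_1}_{\bold Q}$ is built from the cubes $I^n$ based on $I=I_1=(0\to 1)$, whereas the cubical set being realized is the singular cubical set built from the cubes $I_k^n$; since Theorem~\ref{t4.12} applies to an arbitrary cubical set regardless of how it arises, this mismatch causes no difficulty.
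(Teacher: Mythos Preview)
Your proposal is correct and is exactly the intended argument: the paper states Corollary~\ref{c4.13} without proof, since it is the immediate specialization of Theorem~\ref{t4.12} to the cubical set $K=S^{\Box}_{I_k}(Q)$. Your additional remarks (that $S^{\Box}_{I_k}(Q)$ is a cubical set by Proposition~\ref{p4.2}, and that the $I_1$ versus $I_k$ distinction is irrelevant here) are accurate and more than the paper itself supplies.
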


\begin{corollary} \label{c4.14} For any quiver $Q$ the quiver  realization 
$\left|S^{\Box}_{\bf Q}(Q) \right|_{\mathbf Q}$ of the cubical set $S^{\Box}_{\bf Q}(Q)$ coincides with the quiver $Q$. 
\end{corollary}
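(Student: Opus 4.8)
The plan is to combine Corollary \ref{c4.13} with an explicit description of the one-skeleton of $S^{\Box}_{\mathbf Q}(Q)$. Write $K = S^{\Box}_{\mathbf Q}(Q) = S^{\Box}_{I_1}(Q)$, where $I_1 = I = (0\to 1)$, so that $I^0 = \{0\}$ and $I^1 = I$. By Corollary \ref{c4.13} the realization $|K|_{\mathbf Q}$ coincides with $|\operatorname{sk}_1(K)|_{\mathbf Q}$, and by the argument in the proof of Theorem \ref{t4.12} this realization is built from the non-degenerate cubes of $K$ in dimensions $0$ and $1$ glued along their faces. So the first task is to identify those cubes and compute their faces.

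I would first note that $K_0 = \mathbf Q(I^0, Q)$ is just the vertex set $V_Q$, all cubes non-degenerate. For $K_1 = \mathbf Q(I, Q)$, Definition \ref{d3.1} applied to the unique arrow $a = (0\to 1)$ of $I$ shows that a singular $1$-cube $\phi$ is of exactly one of two kinds: type (i), where $\phi_E(a) = e \in E_Q$ and then necessarily $\phi_V(0) = s(e)$, $\phi_V(1) = t(e)$; or type (ii), where $\phi_E(a) = v \in V_Q$ and $\phi_V(0) = \phi_V(1) = v$. Since $\varepsilon_1(w) = w\sigma_1$ with $\sigma_1\colon I \to \{0\}$ sending both vertices and the arrow of $I$ to $0$, the degenerate $1$-cubes are precisely those of type (ii); hence the non-degenerate $1$-cubes are exactly those of type (i), and $\phi \mapsto \phi_E(a)$ is a bijection from the non-degenerate $1$-cubes of $K$ onto $E_Q$. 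Computing faces, $\partial_1^0(\phi) = \phi\,\delta_1^0$ is the $0$-cube $\phi_V(0) = s(e)$, and likewise $\partial_1^1(\phi) = t(e)$.

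Finally I would assemble the realization. Unwinding Definition \ref{d4.5}, the quiver $|\operatorname{sk}_1(K)|_{\mathbf Q}$ is the quotient of $\bigl(\coprod_{v\in V_Q}\{v\}\times I^0\bigr)\sqcup\bigl(\coprod_{e\in E_Q}\{\phi_e\}\times I\bigr)$ in which, for each $e$ and each $\alpha \in \{0,1\}$, the relation $\sim_{\Gamma(\phi_e,1,\alpha)}$ glues the $\alpha$-endpoint of the copy $\{\phi_e\}\times I$ to the $0$-cube $\partial_1^\alpha(\phi_e)$; the degenerate $1$-cubes $\varepsilon_1(w)$ collapse onto the corresponding $0$-cubes via the $\sim_{\Theta}$-relations and contribute nothing new. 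The resulting quiver therefore has vertex set $V_Q$, arrow set $E_Q$ (one arrow, the unique arrow of $\{\phi_e\}\times I$, for each $e$), and the arrow indexed by $e$ runs from $s(e)$ to $t(e)$. This is exactly the quiver $Q$, and the identification is natural in $Q$.

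The step needing the most care is the last one: checking that the quotient is no coarser than $Q$, i.e.\ that neither $\sim_\Gamma$ nor $\sim_\Theta$ identifies two cubes lying over distinct arrows of $Q$ or over distinct vertices of $Q$, so that the only identifications among the arrows $\{\phi_e\}\times I$ and among the vertices $\{v\}\times I^0$ are those forced by the incidence maps $s,t$. This is a routine but slightly delicate unwinding of the definition of the quotient quiver attached to the generating relations $\sim_{\Gamma(x,i,\alpha)}$ and $\sim_{\Theta(y,i)}$.
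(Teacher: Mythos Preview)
Your proof is correct and follows essentially the same approach the paper indicates: the paper cites \cite{MurVer} and says the argument there proceeds ``using the same line of arguments as in the proof of Theorem \ref{t4.12}'', i.e.\ reduce to the one-skeleton and then identify it explicitly, which is precisely what you do via Corollary \ref{c4.13} together with your description of the non-degenerate $0$- and $1$-cubes of $S^{\Box}_{\mathbf Q}(Q)$.
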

\begin{proof} This statement is proved in \cite{MurVer} using the same line of arguments as 
in the proof of Theorem  \ref{t4.12}.
\end{proof}

\begin{proposition}\label{p4.15}  i)  The simple quiver realization $|K|_{\bold Q}$ of a simple cubical set $K$ is a digraph. 

ii)  For any cubical set $K$ and any line digraph $I_k$ with $k\geq 2$ the $I_k$ quiver realization $|K|_{\bold Q}^{I_k}$ is a digraph. 
\end{proposition}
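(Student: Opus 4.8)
The plan is to prove (i) and (ii) separately; in each case the aim is to verify that the resulting quiver is \emph{simple} in the sense of Definition \ref{d3.5}, so that it is a digraph by Remark \ref{r3.6}.

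For (i), I would first apply Theorem \ref{t4.12} to replace $|K|_{\mathbf Q}$ by $\left|\operatorname{sk}_1(K)\right|_{\mathbf Q}$, so that only $K_0$, $K_1$ and the degenerate cubes built from them are relevant. Unwinding Definition \ref{d4.5} in dimensions $0$ and $1$ then gives an explicit description: the vertex set is $K_0$ --- for $x\in K_1$ the relation $\sim_{\Gamma(x,1,\alpha)}$ only glues the two endpoints of $x\times I$ to the $0$-cubes $\partial_1^0(x)$ and $\partial_1^1(x)$, and no two distinct $0$-cubes are ever identified --- and the arrows are indexed by the non-degenerate $1$-cubes, the arrow of $x$ having source $\partial_1^0(x)$ and target $\partial_1^1(x)$; a degenerate $1$-cube $\varepsilon_1(y)$ contributes an arrow collapsed onto the vertex $y$ by $\sim_{\Theta(y,1)}$, and, by the argument used for Corollary \ref{c4.14}, two distinct non-degenerate $1$-cubes are never identified. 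Observe next that if $K$ is simple then condition (i) of Definition \ref{d4.10} rules out any degenerate $1$-cube, since $\partial_1^0(\varepsilon_1 y)=\partial_1^1(\varepsilon_1 y)=y$; hence $K_1$ itself indexes the arrows. Given this description, condition (i) of Definition \ref{d3.5} is literally condition (i) of Definition \ref{d4.10}, and condition (ii) of Definition \ref{d3.5} is literally condition (ii) of Definition \ref{d4.10}, so $|K|_{\mathbf Q}$ is simple.

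For (ii) one cannot simply imitate the proof of Theorem \ref{t4.12}: when $k\geq 2$ the cube $I_k^q$ is \emph{not} the union of its $(q-1)$-dimensional faces (e.g.\ the vertex $(1,\dots,1)$ lies on no face), so the skeletal induction used there is unavailable. What the hypothesis $k\geq 2$ does provide is the set of strictly interior vertices $\{1,\dots,k-1\}^n$ of $I_k^n$, which is nonempty and is reached by no face inclusion $\delta_i^\alpha$; in particular each $1$-cube of $K$ is realized as a directed path of length $k\geq 2$, so a $1$-cube with equal faces becomes a directed cycle of length $\geq 2$ rather than a loop. The plan is to establish a normal form: every vertex of $|K|_{\mathbf Q}^{I_k}$ has a unique representative $(x,v)$ with $x$ a non-degenerate $n$-cube of $K$ and $v\in\{1,\dots,k-1\}^n$, and every arrow that is not collapsed to a vertex has a unique representative $(x,a)$ with $x$ non-degenerate and $a$ an arrow of $I_k^n$ all of whose endpoint coordinates, apart from the one in the direction of $a$, lie in $\{1,\dots,k-1\}$. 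Existence is a terminating reduction: by $\sim_{\Theta}$ one passes to the non-degenerate core of the cube part, which strictly lowers the dimension whenever a degeneracy is present, and by $\sim_{\Gamma}$ one pushes a boundary vertex or arrow onto a face, again strictly lowering the dimension; since the dimension cannot decrease indefinitely, the process halts at a normal form.

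The main obstacle is the uniqueness half of this statement, i.e.\ confluence of the reduction and the absence of unforeseen identifications; the intended tool is comparison with the topological realization of Proposition \ref{p2.5}. The map $v\mapsto v/k$ from $V_{I_k}$ to $T=[0,1]$ is compatible with the face inclusions $\delta_i^0,\delta_i^k$ and the projections $\sigma_i$, which under Proposition \ref{p3.2} correspond to $\delta_i^0,\delta_i^1,\sigma_i$ on $T$, so it induces well-defined maps from the vertex set and from the edge set of $|K|_{\mathbf Q}^{I_k}$ into $|K|_{\mathbf{Top}}$. A normal-form vertex $(x,v)$ is sent to the point $(x,v/k)$ of the open $n$-cell of $|K|_{\mathbf{Top}}$ indexed by the non-degenerate cube $x$; since by Proposition \ref{p2.5} these open cells are pairwise disjoint and $v\mapsto v/k$ is injective on $\{1,\dots,k-1\}^n$, distinct normal forms map to distinct points, which yields the uniqueness. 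From the normal forms one then reads off Definition \ref{d3.5}: for an arrow with normal form $(x,a)$, the endpoints $v,v'$ of $a$ differ in one coordinate, at least one of them is a normal-form vertex (because $k\geq 2$), and their images in $|K|_{\mathbf{Top}}$ are distinct --- one lies in the open cell of $x$, the other is a different point of that cell or lies in a strictly lower cell --- so $s(a)\neq t(a)$; and the analogous bookkeeping for the edge map is meant to yield condition (ii), that no ordered pair of vertices carries two distinct arrows. I expect that last point --- uniqueness of the arrow between a given ordered pair of vertices --- to be the delicate step of the argument. Granting it, $|K|_{\mathbf Q}^{I_k}$ is simple, hence a digraph by Remark \ref{r3.6}.
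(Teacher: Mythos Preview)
For part (i) your approach coincides with the paper's: both reduce via Theorem~\ref{t4.12} (equivalently, the skeletal filtration) to the $1$-skeleton and then read off the simplicity of the resulting quiver directly from Definition~\ref{d4.10}. For part (ii) you go well beyond the paper, whose entire proof is the single line ``It follows immediately from consideration of $I_k$ realization of $1$-dimensional skeleton $\operatorname{sk}_1(K)$.'' Your observation that the reduction of Theorem~\ref{t4.12} is unavailable when $k\ge 2$ is correct, and your normal-form argument together with the comparison map to $|K|_{\mathbf{Top}}$ is a clean and genuine way to establish uniqueness of vertex representatives and the absence of loops.

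The step you flag as delicate, however, is not merely delicate --- it actually fails, and with it statement (ii). Let $K$ have a single non-degenerate $0$-cube $p$, a single non-degenerate $1$-cube $a$ with $\partial_1^0(a)=\partial_1^1(a)=p$, and a single non-degenerate $2$-cube $x$ with all four $1$-faces equal to $a$; the cubical identities are trivially satisfied since every $0$-face of $x$ is $p$. Take $I_2=(0\to 1\to 2)$. In $|K|_{\mathbf Q}^{I_2}$ the vertices $(x,(0,1))$ and $(x,(1,0))$ both lie in the class of $(a,1)$, while $(x,(1,1))$ is the sole interior vertex of the $2$-cube; the two arrows $\bigl(x,(0,1)\to(1,1)\bigr)$ and $\bigl(x,(1,0)\to(1,1)\bigr)$ therefore share source and target. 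Yet they are \emph{not} identified: under your own midpoint map into $|K|_{\mathbf{Top}}$ they go to the distinct interior points $(\tfrac14,\tfrac12)$ and $(\tfrac12,\tfrac14)$ of the open $2$-cell indexed by $x$. Thus $|K|_{\mathbf Q}^{I_2}$ carries parallel arrows and is not a digraph in the sense of Definition~\ref{d2.6}. Neither your bookkeeping nor the paper's one-line argument can be completed for (ii) as stated; some additional hypothesis on $K$ (ruling out such coincidences among faces) is required.
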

\begin{proof} i) It follows  from  Definition \ref{d4.5} that the simple quiver realization 
$|K|_{\bold Q}$ is defined by the set of non-degnerate cubes of $K$.   
Now we use induction in $q$ for the skeleton filtration of $K$ given by (\ref{4.3}).  For $q=0, 1$ 
the simple quiver  realization  of $\operatorname{sk}_q(K)$ gives  a simple quiver
 by  Definitions \ref{d4.10}  and \ref{d3.5}   which is 
a digraph by Remark \ref{r3.6}.  To obtain the  step of induction it is sufficiently to note 
that the quiver realization of an $q$-dimensional cube $k_q\in K_q$ is obtained by attaching $q$-dimensional digraph cube
 $I_k^q$  to the quiver realization  of its faces  which lay in the quiver realization of  $\operatorname{sk}_{q-1}(K)$ by 
 the equivalence relation $\sim_{(\delta, x,i,\alpha)}$ from  Definition \ref{d4.6}. 

ii)  It follows immediately from consideration  of $I_k$ realization of $1$-dimensional skeleton 
$\operatorname{sk}_1(K)$. 
\end{proof}

\begin{example}\label{e4.15} \rm  Now we show    that in opposite to Corollary  \ref{c4.14}, the quiver realization 
$\left|S^{\Box}_{I_k}\right|_{\bold Q}$ does not coincide with the quiver $Q$  in general case for  $k\geq 1$. 
Let $G=(0\to 1)$ be the digraph $I$  and $I_2=(0\to 1\to 2)$ be the line digraph. We describe the skeletons 
$\operatorname{sk}_q(K)\, (q=0,1)$  of the cubical set $S^{\Box}_{I_2}(G)$.  To do this it is sufficiently to
 describe all  non-degenerate singular cubes $I_2^q\to G$ for $q=0,1$. For $q=0$ we have  two such 
cubes $\psi^0_i\colon  \{0\}\to G\, (i=0,1)$ where $\psi^0_i(0)=i\in V_G$. For $q=1$ 
we have  also two non-degenerate  cubes $\phi^1_i\colon  I_2\to G\, (i=0,1)$  given on the set of vertices by 
 $$
\phi^1_0(j)= \begin{cases} j& \text{for}\ j=0,1, \\
                                           1& \text{for}\ j=2\\
\end{cases} 
\ \ \text{and }
\ \ 
\phi^1_1(j)= \begin{cases} 0& \text{for}\ j=0,1, \\
                                           1& \text{for}\ j=2.\\
\end{cases} 
$$ 
It is easy to see, that $\partial^1_1(\phi^1_0)=\psi^0_1$, $\partial^1_0(\phi^1_0)=\psi^0_0$,  $\partial^1_1(\phi^1_1)=\psi^0_1$, $\partial^1_0(\phi^1_1)=\psi^0_0$. Consider  the quiver realization  
$\left|\operatorname{sk}_1\left( S^{\Box}_{I_2}(G)\right)\right|_{\mathbf Q}$
of the one-dimensional skeleton  of the cubical set $S^{\Box}_{I_2}(G)$. It  is the quiver $Q=(V,E,s,t)$ with two vertices  
$(\psi^0_0, 0)$ and $(\psi^0_1, 0)$, two arrows $(\phi^1_0, I_1)$ and  $(\phi^1_1, I_1)$ where $s(\phi^1_0, I_1)= s(\phi^1_1, I_1) =(\psi^0_0, 0), t(\phi^1_0, I_1)= t(\phi^1_1, I_1) =(\psi^0_1, 0)$. The quivers $Q$ and $G$ are different. Hence the quivers $\left| S^{\Box}_{I_2}(G)\right|_{\mathbf Q}$ and $G$  are different.
\end{example}

\section{Homology of  cubical sets and  quivers}
\setcounter{equation}{0}
\label{S5}

In this Section we recall the definition and basic properties of homology groups of cubical sets \cite{Nonab, Grandis1}. Then we 
define the collection of singular cell homologies  of a quiver and describe relations between them. Using the quiver realization of cubical sets we introduce also path homology of cubical sets and describe their properties.

 Let $R$ be a commutative and unitary ring of coefficients and let $K$ be a cubical set. For $n\geq 0$, let $Q_n(K)$ be the free $R$-module generated by elements $k_n\in K_n$ and let $Q_{-1}(K)=0$. For $n\geq 1$, define the map $\partial\colon Q_n(K)\to Q_{n-1}(K)$ on basic 
elements $k_n$ setting 
\begin{equation}\label{5.1}
\partial(k_n)=\sum_{i=1}^n (-1)^i
\left(\partial_i^0(k_n)-\partial_i^1(k_n)\right), 
\end{equation}
and we have  $\partial=0\colon  Q_0(K)\to Q_{-1}(K)=0$. It is easy to check that $\partial^2=0$.  Let 
$B_{-1}(K)=B_{0}(K)=0$ 
and,  for $n\geq 1$, $B_n(K)$ be the submodule  of $Q_n(K)$ generated by 
degenerated elements $\varepsilon(k_{n-1})$ where $k_{n-1}\in K_{n-1}$. We have  
$\partial(B_n)\subset B_{n-1}$ for 
$n\geq 0$.  Hence we obtain  a chain complex $(Q_*(K), \partial)$ and a quotient 
complex $\Omega_*^c(K)=Q_*(K)/B_*(K) $ with the induced differential which 
we continue to denote $\partial$. 

Denote by $\bold{Ch}$ the category consisting of chain complexes of modules over  $R$ and chain homomorphisms. It follows immediately from the definition of the chain complex $\Omega_*^c(K)$  that we have a functor
$
\mathcal C\colon \bold{Cub}\longrightarrow \bold{Ch}.
$

\begin{definition}\label{d5.1} \rm The homology groups   $H_*(\Omega_*^c(K), R)$ are called \emph{normalized homologies } of the cubical set $K$ 
and are denoted $H_*(K,R)$. 
\end{definition}

In what follows we shall omit the ring $R$ in the notations of homology and  we shall write $H_*(K)$ instead of $H_*(K,R)$ to simplify notations.

There is  a relation between homology groups of a cubical set $K$ and its topological  realization $ |K|_{\bold{Top}}$  \cite[\S 1.8]{Grandis1}.

\begin{proposition}\label{p5.2} The  homology groups $H_*(K)$ of a cubical set
$K$ coincide with the homology  groups $H_*(|K|_{\bold{Top}})$ of the $CW$-complex  
$|K|_{\bold{Top}}$. 
\end{proposition}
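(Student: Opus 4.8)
The plan is to identify the normalized chain complex $\Omega_*^c(K)$ with the cellular chain complex of the $CW$-complex $|K|_{\bold{Top}}$, and then invoke the standard fact that cellular homology of a $CW$-complex computes its singular homology. First I would describe the two chain modules and see that they agree. By Proposition \ref{p2.5}, $|K|_{\bold{Top}}$ carries a $CW$-structure with exactly one $n$-cell $e_x$ for each non-degenerate cube $x\in K_n$, the characteristic map of $e_x$ being $T^n\to|K|_{\bold{Top}}$, $v\mapsto[(x,v)]$; hence the cellular module $C_n^{\mathrm{cw}}(|K|_{\bold{Top}})$ is the free $R$-module on the set of non-degenerate $n$-cubes. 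On the other side, the basis $K_n$ of $Q_n(K)$ splits as the disjoint union of the degenerate and the non-degenerate $n$-cubes, and since every degenerate cube is $\varepsilon_i(k_{n-1})$ for some $i$ and $k_{n-1}$, the submodule $B_n(K)$ is precisely the span of the degenerate basis elements. Thus $B_n(K)$ is a direct summand of $Q_n(K)$ and $\Omega_n^c(K)=Q_n(K)/B_n(K)$ is canonically the free $R$-module on the non-degenerate $n$-cubes, giving a degreewise isomorphism $\Omega_*^c(K)\cong C_*^{\mathrm{cw}}(|K|_{\bold{Top}})$.

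Next I would check that this isomorphism is a chain map, i.e.\ that it carries the differential $\partial$ of \eqref{5.1} to the cellular boundary. For a non-degenerate $x\in K_n$ the boundary $\partial T^n$ of the standard cube is the union of the $2n$ faces $\delta_i^\alpha(T^{n-1})$, and by the relations defining $|K|_{\bold{Top}}$ in Definition \ref{d2.4} the restriction of the characteristic map of $e_x$ to the face $\delta_i^\alpha$ is $v\mapsto[(x,\delta_i^\alpha(v))]=[(\partial_i^\alpha(x),v)]$. If $\partial_i^\alpha(x)$ is non-degenerate, this is the characteristic map of $e_{\partial_i^\alpha(x)}$; if it is degenerate, say $\partial_i^\alpha(x)=\varepsilon_j(y)$, the restriction factors as $v\mapsto[(y,\sigma_j(v))]$ through $T^{n-2}$, hence through the $(n-2)$-skeleton, and so contributes $0$ after projection onto $|K|_{\bold{Top}}^{(n-1)}/|K|_{\bold{Top}}^{(n-2)}$. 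Orienting each cube $T^m$ by the product of the standard orientations of the factors $[0,1]$, the face $\delta_i^\alpha(T^{n-1})$ enters $\partial T^n$ with incidence number $(-1)^{i+\alpha}$; therefore the cellular boundary of $e_x$ is $\sum_{i,\alpha}(-1)^{i+\alpha}e_{\partial_i^\alpha(x)}=\sum_{i=1}^n(-1)^i\bigl(e_{\partial_i^0(x)}-e_{\partial_i^1(x)}\bigr)$, with a summand read as $0$ whenever the corresponding face is degenerate. This is exactly the image of \eqref{5.1} under the identification of the previous paragraph, so $\Omega_*^c(K)\cong C_*^{\mathrm{cw}}(|K|_{\bold{Top}})$ as chain complexes.

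Finally I would conclude $H_*(K)=H_*(\Omega_*^c(K))\cong H_*\bigl(C_*^{\mathrm{cw}}(|K|_{\bold{Top}})\bigr)\cong H_*(|K|_{\bold{Top}})$, using that cellular homology of a $CW$-complex is naturally isomorphic to its singular homology. The main obstacle is the middle step: fixing consistent orientations on the cubes $T^m$, computing the incidence numbers of the faces of $T^n$, and verifying both that the degenerate faces drop out of the cellular boundary and that the surviving signs reproduce \eqref{5.1}; the rest is bookkeeping. (One could instead compare both sides with the singular cubical chain complex of $|K|_{\bold{Top}}$ through the topological realization/singular cubical adjunction, but the cellular argument above is the most economical.)
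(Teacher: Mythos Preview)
The paper does not actually prove this proposition; it merely records the statement with a reference to \cite[\S 1.8]{Grandis1}. Your proposal, by contrast, supplies a full argument, and a correct one: identifying $\Omega_*^c(K)$ with the cellular chain complex of $|K|_{\bold{Top}}$ (both free on the non-degenerate cubes, with matching differentials once the product orientation on $T^n$ is fixed) and then invoking the equivalence of cellular and singular homology is the standard route to this result. The sign computation $(-1)^{i+\alpha}$ for the incidence number of the face $\delta_i^\alpha(T^{n-1})$ is correct and does reproduce \eqref{5.1}, and your treatment of degenerate faces (factoring through $T^{n-2}$, hence through the $(n-2)$-skeleton) is the right reason they contribute zero to the cellular boundary. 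So there is nothing to compare against and nothing to fix; you have simply written out what the paper leaves to the citation.
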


Now,   for any line digraph $I_k\, (k\geq 1)$  we define  the functor of  cell homology groups on the category    $\bold Q$  of quivers. 
In  the case of directed line digraphs  the cell homology groups on the category $\mathbf D$ of digraphs where defined in \cite{Mi4, Mi}.  
We note that the category $\mathbf Q$ of quivers   differs from the category $\mathcal Q$ of quivers  that is considered  in \cite{Mi4}.  
The objects of these categories are the same and we have the inclusion
$\mathcal Q\subset \mathbf Q$ of categories since   in $\mathbf Q$ we admit morphisms which satisfy  the  condition  ii)  in Definition \ref{d3.1}. 

\begin{definition}\label{d5.3} \rm Let $Q$ be a quiver  and $I_k$ be a line digraph with $k\geq 1$. Define the \emph{cell homology groups}
$
H_*^{I_k}(Q)
$
setting 
$$
H_*^{I_k}(Q)=H_*\left(S^{\Box}_{I_k}(Q)\right). 
$$
\end{definition}

\begin{corollary}\label{c5.4} For any quiver $Q$ and any line digraph $I_k$ with $k\geq 1$, the  homology groups 
$
H_*^{I_k}(Q)
$ 
 coincide with the homology  groups 
$H_*\left(\left|S^{\Box}_{I_k}(Q)\right|_{\bold{Top}}\right)$ of the $CW$-complex 
 $\left|S^{\Box}_{I_k}(Q)\right|_{\bold{Top}}$.
\end{corollary}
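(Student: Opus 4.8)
The plan is to derive this statement directly from Proposition~\ref{p5.2} applied to the cubical set $K = S^{\Box}_{I_k}(Q)$. First I would observe that by Proposition~\ref{p4.2} the object $S^{\Box}_{I_k}(Q)$ is genuinely a cubical set, so every construction of Section~\ref{S5} applies to it verbatim: one forms the free chain complex $Q_*(S^{\Box}_{I_k}(Q))$ with differential~(\ref{5.1}), the subcomplex $B_*$ generated by degenerate singular cubes, the quotient complex $\Omega_*^c(S^{\Box}_{I_k}(Q))$ with its induced differential, and then, unwinding Definition~\ref{d5.3} and Definition~\ref{d5.1}, one has
$$
H_*^{I_k}(Q) \;=\; H_*\!\left(S^{\Box}_{I_k}(Q)\right)\;=\; H_*\!\left(\Omega_*^c\!\left(S^{\Box}_{I_k}(Q)\right),R\right).
$$

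Next I would invoke Proposition~\ref{p2.5} to ensure that $\left|S^{\Box}_{I_k}(Q)\right|_{\bold{Top}}$ really is a $CW$-complex, with one $n$-cell for each non-degenerate singular $n$-cube $\varphi\colon I_k^n\to Q$, so that the right-hand side of the asserted equality is meaningful. Then the coincidence
$$
H_*^{I_k}(Q)\;=\;H_*\!\left(\left|S^{\Box}_{I_k}(Q)\right|_{\bold{Top}}\right)
$$
is precisely Proposition~\ref{p5.2} specialized to the cubical set $K = S^{\Box}_{I_k}(Q)$, and nothing more is required.

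Since the corollary is a straightforward specialization of an already established fact, there is no real obstacle. The only point I would flag as worth a moment's attention is that Proposition~\ref{p5.2}, as we are using it, must hold for an \emph{arbitrary} cubical set, with no hidden finiteness or smallness hypothesis (for instance, $S^{\Box}_{I_k}(Q)$ typically has infinitely many non-degenerate cubes in each positive dimension). Granting Proposition~\ref{p5.2} as stated --- it is the content of \cite[\S 1.8]{Grandis1}, where the comparison $H_*(K)\cong H_*(|K|_{\bold{Top}})$ is given for all cubical sets --- the corollary follows immediately with no further computation.
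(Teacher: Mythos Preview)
Your proposal is correct and matches the paper's approach: the corollary is stated without proof because it is immediate from Definition~\ref{d5.3} together with Proposition~\ref{p5.2} applied to $K=S^{\Box}_{I_k}(Q)$, which is precisely what you spell out. Your additional remarks (invoking Proposition~\ref{p4.2} and Proposition~\ref{p2.5}, and noting that Proposition~\ref{p5.2} requires no finiteness hypothesis) are sound and make explicit what the paper leaves tacit.
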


\begin{theorem}\label{t5.5} Let $f\simeq g\colon Q\rightarrow Q^{\prime}$ be two homotopic quiver
maps. Then 
\begin{equation*}
f_{\ast }=g_{\ast }\colon H_{m}^{I_k}(Q)\rightarrow H_{m}^{I_k}(Q^{\prime})\ \ \text{for
any }\ \ m\geq 0.
\end{equation*}
\end{theorem}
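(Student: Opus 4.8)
The plan is to reduce Theorem \ref{t5.5} to the standard chain-homotopy argument, with the role of the "prism operator" played by the cubical structure of $S^{\Box}_{I_k}$ together with the extra cube $I_k$ supplied by the homotopy. First I would recall that, by Definition \ref{d3.4}, $f\simeq g$ means there is a line digraph $I_m$ and a quiver map $F\colon I_m\Box Q\to Q^{\prime}$ with $F|_{\{0\}\Box Q}=f$ and $F|_{\{m\}\Box Q}=g$. It suffices to treat the one-step case $m=1$: a general homotopy is a finite concatenation of one-step homotopies (using the inclusions $\{j\}\hookrightarrow I_m$ to build intermediate maps $h_j\colon Q\to Q^{\prime}$ with $h_0=f$, $h_m=g$ and $h_{j}\simeq h_{j+1}$ by a one-step homotopy), and if $f_*=g_*$ holds for one-step homotopic maps then by transitivity it holds in general. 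So fix $F\colon I\Box Q\to Q^{\prime}$ with $I=I_1=(0\to 1)$ (the case $I=(1\to 0)$ is symmetric, swapping the roles of $0$ and $1$).

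Next I would build the prism. Given a singular $n$-cube $\phi\colon I_k^n\to Q$, the product with $I$ gives a quiver map $\mathrm{id}_I\Box\,\phi\colon I\Box I_k^n\to I\Box Q$, and composing with $F$ yields $F\circ(\mathrm{id}_I\Box\,\phi)\colon I\Box I_k^n\to Q^{\prime}$. The subtlety is that the target of a singular cube must be a cube $I_k^{n+1}=I_k\Box I_k^n$, not $I\Box I_k^n$. Here I would invoke Proposition \ref{p4.9}: there is a digraph map $p\colon I_k\to I$ (projection associated to a fixed pair of consecutive vertices) with $p^{n+1}\colon I_k^{n+1}\to I^{n+1}$, hence $p\Box\,\mathrm{id}_{I_k^n}\colon I_k\Box I_k^n\to I\Box I_k^n$, and I set
\[
P(\phi)\;=\;F\circ(\mathrm{id}_I\Box\,\phi)\circ(p\Box\,\mathrm{id}_{I_k^n})\colon\;I_k^{n+1}\longrightarrow Q^{\prime},
\]
an $(n+1)$-cube in $Q^{\prime}$. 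Extending $R$-linearly gives $P\colon Q_n(S^{\Box}_{I_k}(Q))\to Q_{n+1}(S^{\Box}_{I_k}(Q^{\prime}))$. I would then check that $P$ carries degenerate cubes to degenerate cubes (since $(\mathrm{id}_I\Box\,\phi\sigma_i)$ factors an appropriate degeneracy through, using the compatibility of $\Box$-products with the $\sigma_i$), so $P$ descends to the normalized complexes $\Omega^c_*$.

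The heart of the matter is the prism identity $\partial P + P\partial = g_{\Box}-f_{\Box}$ on $\Omega^c_*$. I would compute $\partial P(\phi)$ from \eqref{5.1}: the faces $\partial_i^{\alpha}$ of the $(n+1)$-cube $I_k^{n+1}=I_k\Box I_k^n$ split into the two "end" faces $i=1$, $\alpha\in\{0,k\}$ — which, after applying $p$, hit $\{0\}$ and $\{1\}$ in $I$ and therefore give back $F|_{\{0\}\Box Q}\circ\phi=f_{\Box}(\phi)$ and $F|_{\{1\}\Box Q}\circ\phi=g_{\Box}(\phi)$ respectively — and the "side" faces $i\geq 2$, which correspond to $\mathrm{id}_I\Box\,(\text{faces of }I_k^n)$ and thus assemble (with signs) into $P(\partial\phi)$. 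The signs work out because the first coordinate of $I_k^{n+1}$ is the new $I_k$-direction, so shifting a face index of $I_k^n$ up by one introduces exactly the sign discrepancy between $\partial P$ and $P\partial$; this is the usual cubical prism bookkeeping. The main obstacle — and the one place genuine care is needed rather than routine copying — is verifying that the non-directed case $I_k=(1\to0)$ and, more importantly, the composition with $p^{n+1}$ does not disturb this identity: one must confirm that the two end faces of $I_k^{n+1}$ really do map, via $p$, onto the two ends of $I$ (this is exactly how $p$ was defined on the chosen consecutive pair $\{m,m+1\}$ in the paragraph before Proposition \ref{p4.9}, where $p(v)=0$ for $v\le m$ and $p(v)=1$ for $v\ge m+1$), and that the degeneracy-compatibility survives the extra projection. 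Once the prism identity holds on $\Omega^c_*(S^{\Box}_{I_k}(Q))$, passing to homology gives $f_*=g_*$ on $H^{I_k}_*=H_*(S^{\Box}_{I_k}(-))$, completing the proof.
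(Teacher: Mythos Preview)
Your proposal is correct and follows essentially the same route as the paper: reduce to a one-step homotopy, collapse the extra $I_k$-factor to $I$ via a digraph projection, and build the prism operator $P(\phi)=F\circ(\mathrm{id}\Box\,\phi)\circ(p\Box\,\mathrm{id})$ to obtain the chain-homotopy identity. The paper defines an ad hoc projection $\pi\colon I_k\to I$ (assuming without loss of generality that $I_k$ contains the arrow $0\to 1$), whereas you invoke the projection $p$ already constructed before Proposition~\ref{p4.9}; these are the same map for the choice $m=0$, and your observation that $p(0)$ and $p(k)$ are always the two distinct endpoints of $I$ handles the general $I_k$ without the ``WLOG'' step.
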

\begin{proof}   For $k=1$ the proof  is standard \cite[Theorem 8.3.8]{Hilton}.   The general case is similar 
to the case of directed line digraph $I_k$  and the maps $f$, $g$ in the category $\mathbf D$ which was 
considered in  \cite{Mi}. Now we shortly recall this proof  in the notations of the present paper.    
It is sufficient to prove the Theorem in  the case of one-step homotopy with $I_1=I=(0\to 1)$:
\begin{equation*}
F\colon  I\Box Q\to Q^{\prime}, \ F_{\{0\}\Box Q}=f, \ \ F_{\{1\} \Box Q}=g.
\end{equation*}
Without restriction of generality we suppose that the digraph $I_k$ contains the arrow $(0\to 1)$.
Define the digraph map $\pi\colon I_k\to I=(0\to 1)$ setting on the set of vertices  
$ \pi(0)=0$ and  $\pi(v)=1$ for $v\ne 0$. Let $\Pi=\pi\colon I_k\to I$.  For $n\geq 1$, we 
 define $\Pi=\operatorname{Id} \Box \pi\colon I_k^{n+1}= I_k^{n}\Box I_k\to I_k^{n}\Box I$ where   
$\operatorname{Id}\colon   I_k^{n}\to  I_k^{n}$ is the identity map.
 Define the sequence of homomorphisms 
$$
s_n\colon {\Omega}_n^c\left(S^{\Box}_{I_k}(Q)\right)\to {\Omega}_{n+1}^c\left(S^{\Box}_{I_k}(Q^{\prime})\right)  \ \text{for}  \ n\geq -1
$$
as follows.
We put $s_{-1}=0$. For $n\geq 0$ and a generator  $\phi\in \Omega_n^c\left(S^{\Box}_{I_k}(Q)\right)$ given by the singular cube $\phi\colon I^n_k\to Q\, (n\geq 0)$ we set
$s_{n}(\phi)\in {\Omega}_{n+1}^c\left(S^{\Box}_{I_k}(Q^{\prime})\right)$ be   the singular 
$(n+1)$-cube  that is given by the  composition of  maps 
\begin{equation*}
I^{n+1}_k=I_k\Box I_k^n\overset{\operatorname{Id}\Box \phi}{\longrightarrow } I_k\Box Q\overset{ \pi\Box \operatorname{Id}}{\longrightarrow } I\Box Q
\overset{F}{\longrightarrow}Q^{\prime}.
\end{equation*}
The
homomorphisms $s_n$ satisfy the properties 
\begin{equation*}
s_{n-1}\partial_n +\partial_{n+1}s_n=f_n-g_n
\end{equation*}
and hence give a chain homotopy between $f_*$ and $g_*$.
\end{proof}

For any quiver  $Q$, let $\mathcal{F}\colon \bold I\to \bold{CW}$ be the 
composition  
$$
 \bold I\overset{\mathcal I}{\longrightarrow}
 \bold{Cub}\overset{|\ |_{\bold{Top}}}{\longrightarrow }\bold{CW} \ \ \text{with}\ \ \mathcal{F}(I_k)=\left|S_{I_k}^{\Box}(Q)\right|_{\bold{Top}}
$$
and let $\mathcal{D}\colon \bold I\to \bold{Ch}$ be the composition
$$
 \bold I\overset{\mathcal I}{\longrightarrow}
 \bold{Cub}\overset{\mathcal C}{\longrightarrow }\bold{Ch}\ \ \text{with}\ \
\mathcal{D}(I_k)=\mathcal C\left(S_{I_k}^{\Box}(Q)\right)
$$
 which are  contravariant functors.

\begin{proposition} \label{p5.6} For any quiver  $Q$ the maps $p$ and $i$ induce the following maps:

i)  cell maps of $CW$-complexes 
$$
\mathcal F(p)\colon \left|S^{\Box}_{\bold{D}}(Q)\right|_{\bold{Top}}\to \left|S^{\Box}_{I_k}(Q)\right|_{\bold{Top}} \ 
\text{and} \ 
\mathcal F(i)\colon \left|S^{\Box}_{I_k}(Q)\right|_{\bold{Top}}\to \left|S^{\Box}_{\bold D}(Q)\right|_{\bold{Top}}
$$
such that $\mathcal F(i)\mathcal F(p)=\operatorname{Id}$.

ii)  chain  maps of chain complexes 
$$
\mathcal D(p)\colon \mathcal C\left(S^{\Box}_{\bold{D}}(Q)\right)\to  \mathcal C\left(S^{\Box}_{I_k}(Q)\right)\ 
\text{and} \ \
\mathcal D(i)\colon \mathcal C\left(S^{\Box}_{I_k}(Q)\right)\to \mathcal C\left(S^{\Box}_{\bold D}(Q)\right)
$$
such that $\mathcal D(i)\mathcal D(p)=\operatorname{Id}$.
\end{proposition}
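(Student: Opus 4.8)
The plan is to obtain both parts formally, since the whole substantive content is already contained in Proposition \ref{p4.9}. Recall that $p\colon I_k\to I$ and $i\colon I\to I_k$ are morphisms of the category $\bold I$ of line digraphs, and that the relation $p^{n}i^{n}=\operatorname{Id}$ recorded just before Proposition \ref{p4.9} says in particular that $p\circ i=\operatorname{Id}_{I}$ in $\bold I$. By Proposition \ref{p4.9} the digraph maps $p$ and $i$ induce cubical maps
\[
p^{\Box}=\mathcal I(p)\colon S^{\Box}_{\bold D}(Q)\longrightarrow S^{\Box}_{I_k}(Q),\qquad i^{\Box}=\mathcal I(i)\colon S^{\Box}_{I_k}(Q)\longrightarrow S^{\Box}_{\bold D}(Q),
\]
with $i^{\Box}p^{\Box}=\operatorname{Id}$; here the contravariance of the functor $\mathcal I$ of Proposition \ref{p4.4} is exactly what turns $p\circ i=\operatorname{Id}_{I}$ into $i^{\Box}p^{\Box}=\operatorname{Id}$.

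For part i) I would apply the topological realization functor $|\ |_{\bold{Top}}\colon\bold{Cub}\to\bold{CW}$ of Proposition \ref{p2.5} to the cubical maps $p^{\Box}$ and $i^{\Box}$, and set $\mathcal F(p)=|p^{\Box}|_{\bold{Top}}$, $\mathcal F(i)=|i^{\Box}|_{\bold{Top}}$, which agrees with the definition $\mathcal F=|\ |_{\bold{Top}}\circ\mathcal I$. Since $|\ |_{\bold{Top}}$ takes values in the category $\bold{CW}$ of $CW$-complexes and cell maps, $\mathcal F(p)$ and $\mathcal F(i)$ are cell maps, and functoriality gives $\mathcal F(i)\,\mathcal F(p)=|i^{\Box}p^{\Box}|_{\bold{Top}}=|\operatorname{Id}|_{\bold{Top}}=\operatorname{Id}$. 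For part ii) I would argue in exactly the same way with the chain-complex functor $\mathcal C\colon\bold{Cub}\to\bold{Ch}$ introduced after Definition \ref{d5.1}, putting $\mathcal D(p)=\mathcal C(p^{\Box})$ and $\mathcal D(i)=\mathcal C(i^{\Box})$; these are chain maps because $\mathcal C$ has values in $\bold{Ch}$, and $\mathcal D(i)\,\mathcal D(p)=\mathcal C(i^{\Box}p^{\Box})=\mathcal C(\operatorname{Id})=\operatorname{Id}$. Both parts can be summarized by the remark that the contravariant functors $\mathcal F$ and $\mathcal D$ carry the identity $p\circ i=\operatorname{Id}_{I}$ of $\bold I$ to an identity, which is precisely the asserted relation.

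There is essentially no obstacle: the one nontrivial fact, namely that $p$ and $i$ induce cubical maps composing to the identity, is Proposition \ref{p4.9}, and the rest is the formal functoriality of $|\ |_{\bold{Top}}$ and of $\mathcal C$. The only point deserving attention is the bookkeeping of variance: $\mathcal I$, and hence $\mathcal F$ and $\mathcal D$, reverses the direction of arrows, so one must verify that $p$ induces a map \emph{out of} the $S^{\Box}_{\bold D}(Q)$-realization and \emph{into} the $S^{\Box}_{I_k}(Q)$-realization (and $i$ the reverse), and that the composite whose realization is the identity is $p^{\Box}$ followed by $i^{\Box}$, in agreement with $i^{\Box}p^{\Box}=\operatorname{Id}$ from Proposition \ref{p4.9}.
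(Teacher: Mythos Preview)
Your proof is correct and follows essentially the same approach as the paper: the paper's proof is the one-line remark ``Follows from Proposition \ref{p4.9}, definition of the functor $\mathcal C$, and Proposition \ref{p2.5}'', and you have simply spelled out how these three ingredients combine via functoriality.
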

\begin{proof} Follows from  Proposition \ref{p4.9}, definition of the functor $\mathcal C$, and Proposition \ref{p2.5}. 
\end{proof}

\begin{theorem}\label{t5.7} For any quiver $Q$, the quiver realization functors  $|\ |_{\bold Q}$ and $|\ |^{I_k}_{\bold Q}$ and the digraph maps $p$ and $i$ induce the  quiver maps
$$
i) \  |p^{\Box}|_{\bold Q}\colon  \left|S^{\Box}_{\bold{D}}(Q)\right|_{\bold{Q}}\to \left|S^{\Box}_{I_k}(Q)\right|_{\bold{Q}},  \ \
 \ 
|i^{\Box}|_{\bold Q}\colon \left|S^{\Box}_{I_k}(Q)\right|_{\bold{Q}}\to \left|S^{\Box}_{\bold D}(Q)\right|_{\bold{Q}}
$$ 
such that  $|i^{\Box}|_{\bold Q}\circ |p^{\Box}|_{\bold Q}=\operatorname{Id}$, and 
$$
ii) \ |p^{\Box}|_{\bold Q}^{I_k}\colon  \left|S^{\Box}_{\bold{D}}
(Q)\right|_{\bold{Q}}^{I_k}\to \left|S^{\Box}_{I_k}(Q)\right|_{\bold{Q}}^{I_k}, \ 
|i^{\Box}|_{\bold Q}^{I_k}\colon \left|S^{\Box}_{I_k}(Q)\right|_{\bold{Q}}^{I_k}\to \left|S^{\Box}_{\bold D}(Q)\right|_{\bold{Q}}^{I_k}
$$ 
such that  $|i^{\Box}|_{\bold Q}\circ |p^{\Box}|_{\bold Q}=\operatorname{Id}$.
\end{theorem}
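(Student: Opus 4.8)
The plan is to obtain the asserted quiver maps by transporting, through the realization functors, the cubical maps already produced in Proposition~\ref{p4.9}, and to read off the identity $|i^{\Box}|_{\bold Q}\circ|p^{\Box}|_{\bold Q}=\operatorname{Id}$ as a formal consequence of functoriality. Concretely, recall that $S^{\Box}_{\bold D}(Q)=S^{\Box}_{\bold Q}(Q)=S^{\Box}_{I_1}(Q)$, so Proposition~\ref{p4.9} furnishes morphisms of cubical sets
\[
p^{\Box}\colon S^{\Box}_{\bold D}(Q)\to S^{\Box}_{I_k}(Q),\qquad i^{\Box}\colon S^{\Box}_{I_k}(Q)\to S^{\Box}_{\bold D}(Q)
\]
in $\bold{Cub}$ with $i^{\Box}\circ p^{\Box}=\operatorname{Id}_{S^{\Box}_{\bold D}(Q)}$.

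For part~i) I would apply the functor $|\ |_{\bold Q}\colon\bold{Cub}\to\bold Q$ of Proposition~\ref{p4.7} to these two cubical maps, and set $|p^{\Box}|_{\bold Q}:=|\ |_{\bold Q}(p^{\Box})$ and $|i^{\Box}|_{\bold Q}:=|\ |_{\bold Q}(i^{\Box})$; these are quiver maps between $\left|S^{\Box}_{\bold D}(Q)\right|_{\bold Q}$ and $\left|S^{\Box}_{I_k}(Q)\right|_{\bold Q}$. Because $|\ |_{\bold Q}$ preserves composition and identity morphisms,
\[
|i^{\Box}|_{\bold Q}\circ|p^{\Box}|_{\bold Q}=|\ |_{\bold Q}\bigl(i^{\Box}\circ p^{\Box}\bigr)=|\ |_{\bold Q}\bigl(\operatorname{Id}\bigr)=\operatorname{Id}_{\left|S^{\Box}_{\bold D}(Q)\right|_{\bold Q}}.
\]
For part~ii) I would run exactly the same argument with $|\ |^{I_k}_{\bold Q}\colon\bold{Cub}\to\bold Q$ in place of $|\ |_{\bold Q}$, using again that it is a functor (Proposition~\ref{p4.7}), which yields $|p^{\Box}|^{I_k}_{\bold Q}$, $|i^{\Box}|^{I_k}_{\bold Q}$, and the relation $|i^{\Box}|^{I_k}_{\bold Q}\circ|p^{\Box}|^{I_k}_{\bold Q}=\operatorname{Id}$.

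There is no essential obstacle here: the entire content of the theorem is that the identity $i^{\Box}p^{\Box}=\operatorname{Id}$ of Proposition~\ref{p4.9} is preserved by any functor out of $\bold{Cub}$, so the only thing one really needs is that $|\ |_{\bold Q}$ and $|\ |^{I_k}_{\bold Q}$ are genuine functors, which is exactly Proposition~\ref{p4.7}. If a more hands-on description is wanted, one can additionally note that on vertices the map $|p^{\Box}|_{\bold Q}$ sends the class of a pair $(\phi,v)$ with $\phi\in\left[S^{\Box}_{\bold D}(Q)\right]_n$, $v\in V_{I^n}$, to the class of $(p^{\Box}(\phi),v)=(\phi\circ p^n,v)$, and likewise on arrows, and that this assignment respects the defining equivalence relations $\sim_{\Gamma(x,i,\alpha)}$ and $\sim_{\Theta(y,i)}$ precisely because $p^{\Box}$ commutes with the face and degeneracy maps; but this verification is already absorbed into Proposition~\ref{p4.7}.
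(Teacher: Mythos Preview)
Your proof is correct and follows exactly the paper's approach: the paper's proof simply reads ``Follows from Propositions \ref{p4.9} and \ref{p4.7},'' which is precisely your argument of applying the realization functors of Proposition~\ref{p4.7} to the cubical identity $i^{\Box}p^{\Box}=\operatorname{Id}$ from Proposition~\ref{p4.9}. If anything, you have spelled out the functoriality step more carefully than the paper does.
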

\begin{proof}  Follows from  Propositions \ref{p4.9} and  \ref{p4.7}. 
\end{proof}

Now using the quiver realization of a cubical set we  introduce a collection of 
path homology theories on the category of  cubical sets and describe  their basic properties.  At first we recall the definition 
of path homology of a digraph  \cite{Axioms}, \cite{MiHomotopy}, \cite{Mi3}. 

Let $G=(V_G,E_G)$ be a digraph,  and $R$ be a commutative ring. 
An \emph{elementary path} on the set $V_G$ is defined as any sequence $i_{0},\dots,i_{p}$ of vertices and is denoted $e_{i_{0}\dots i_{p}}$. The elements of   a free $R$-module $\Lambda_p=\Lambda _{p}( V)$ which is generated by all elementary paths with fixed  $p\geq 0$  are called $p$-\emph{paths}.  We set $ \Lambda _{-1}=0$.  Define the \emph{boundary} operator $\partial\colon \Lambda _{p}\rightarrow \Lambda _{p-1}$ setting 
$
 \partial e_{i_{0}...i_{p}}=\sum\limits_{q=0}^{p}\left( -1\right)^{q}e_{i_{0}\dots \widehat{i_{q}}\dots i_{p}}$  for  $p\geq 1$ and $\partial =0$ for $ p=0$. Then 
 $\partial^2=0$ and we have a chain complex $ \Lambda_*=\Lambda _*(V)$.  For $p\geq 1$,  let  $I_{p}=I_p(V)\subset \Lambda _{p}$ be a submodule that is generated  by all elementary paths for which there are at least two equal consecutive vertices, and let  $I_0=I_{-1}=\{0\}$. Then  $\partial (I_{p})\subset I_{p-1}$ and we  define the chain complex  $\mathcal {R}_{*}$ setting 
$\mathcal {R}_{p}=\Lambda _{p}/I_{p}
$
with  an  indced differential  which we continue to denote by $\partial$.  The elements of  the module  $\mathcal{R}_p$ are called  \emph{regular  paths}.   
 For $p\geq 0$, a regular elementary path  $e_{i_{0}\dots i_{p}}$ is called 
\emph{allowed} if $(i_{k}\rightarrow i_{k+1})\in E_G$  for  $0\leq k\leq  p-1$. In particular, 
any  path $e_{i_0} (i_0\in V_G)$ is allowed. 
For $p\geq 0$, let $\mathcal A_{p}=\mathcal A_{p}(G,R)$ be  the  submodule  of $\mathcal R_{p}(V,R)$ that is generated 
by all the allowed elementary $p$-paths and set $\mathcal A_{-1}=0$.
Define a submodule $\Omega _{p}\subset \mathcal A_p$ setting 
\begin{equation}\label{5.2}
\Omega _{p}=\left\{ v\in {\mathcal A}
_{p}:\partial v\in {\mathcal A}_{p-1}\right\}.  
\end{equation}
 It is easy  to see that we obtain a chain complex 
$\Omega_*=\Omega_*(G,R)$. 

\begin{definition}\label{d5.8} \rm The homologies of   the chain complex $\Omega_*$ are  called \emph{path homologies of the digraph $G$} and denoted by   
$
H_p^{path}(G)=H_{p}\left(\Omega_*\right)$ for $p\geq 0$.
\end{definition}

For a  digraph map  $f\colon G\rightarrow H $  and for every $p\geq 0$ the induced map $
f_{*}\colon \Lambda _{p}(V_G)\rightarrow \Lambda _{p}(V_H)
$
is given on the basic elements by 
 the rule 
$
f_{\ast }\left( e_{i_{0}...i_{p}}\right)
=e_{f(i_{0})...f(i_{p})}.
$
  The map $f_{\ast }$ is a
morphism of chain complexes and   an  induced chain map of quotient chain complexes 
$
f_{\ast }:\mathcal{R}_{*}( V_G) \rightarrow \mathcal{R}_{*}(
V_H) 
$
 is well defined  on basic elements by the rule 
\begin{equation}\label{5.3}
f_{\ast }\left( e_{i_{0}...i_{p}}\right) =
\begin{cases}
e_{f(i_{0})...f(i_{p})}, & \text{if }e_{f(i_{0})...f(i_{p})}\text{ is
regular,} \\ 
0, & \text{if }e_{f(i_{0})...f(i_{p})}\ \text{otherwise.}%
\end{cases}
\end{equation}

It follows from (\ref{5.3}) and (\ref{5.2})   that we have an induced morphism of chain 
complexes $f_*\colon \Omega_*(G)\to \Omega_*(H)$ and an induced  homomorphism of homology groups $H_*^{path}(G)\to H_*^{path}(H)$. 
The path homology groups of digraphs is functorial and  homotopy invariant \cite{MiHomotopy}.

 Recall that by Proposition \ref{p4.15} the quiver realization of a simple cubical set is a digraph, and for  any cubical set $K$ and any line digraph $I_k$ with $k\geq 2$ the $I_k$ quiver realization $|K|_{\bold Q}^{I_k}$ is a digraph. 

\begin{definition}\label{d5.9} \rm  a)  \emph{The simple path homologies}  $H_p^{\bold Q}(K)$ of a  simple cubical set $K$ are defined as the path homologies of the quiver  realization $|K|_{\bold Q}$ that is 
$$
H_p^{\bold Q}(K)=H_p^{path}\left(|K|_{\bold Q}\right).
$$

b) Let $I_k$ be a line digraph with $k\geq 2$.  \emph{The $I_k$-path homologies}  $H_p^{I_k}(K)$ of a cubical set $K$ are defined as the path homologies of the quiver  realization $|K|_{\bold Q}^{I_k}$ that is 
$$
H_p^{I_k}(K)=H_p^{path}\left(|K|_{\bold Q}^{I_k}\right).
$$
\end{definition}

Now we consider an example which illustrate the differences between various homology groups for simple cubical sets.  

\begin{example}\label{e5.10}\rm i) Let $K$ be the simple cubical set with the nondegenerate cubes $K_0=\{\bold 0,\bold 1,\bold 2\}$, $K_1=\{\bold i, \bold j, \bold k\}$ and the face maps $\partial^0_1(\bold i)=\bold 0, \partial^1_1(\bold i)=\bold 1, 
\partial^0_1(\bold j)=\bold 1, \partial^1_1(\bold j)=\bold 2, \partial^0_1(\bold k)=\bold 0, \partial^1_1(\bold k)=\bold 2$. Then by 
Definition \ref{d5.1} and Proposition \ref{p5.2} we have 
$$
H_n(K)=\begin{cases} R& \text{for} \ n=0,1,\\
                                 0 & \text{for} \ n\geq 2.\\
 \end{cases}
$$
By   Definition \ref{d4.5}  the simple quiver realization $|K|_{\bold Q}$ gives the digraph $G$
$$
\begin{matrix}
&&v_2&&\\
&\nearrow &   &\nwarrow&\\
v_0 &&\rightarrow&&v_1\\
\end{matrix}
$$
with the vertices $v_0,v_1,v_2$ which correspond to the zero-dimensional cubes $\bold 0,\bold 1,\bold 2$, respectively.
Hence by the definition of path homologies  the nontrivial modules $\Omega_p$ defined in (\ref{5.2}) are generated as follows 
$$
\Omega_0=\langle e_{v_0},   e_{v_1},  e_{v_2}\rangle, \  \Omega_1=\langle e_{v_0v_1},   e_{v_1v_2},  e_{v_0v_2}\rangle, \
\Omega_2=\langle e_{v_0v_1v_2}\rangle, 
$$
and we have the following nontrivial differentials which are given  on the basic elements by $\partial e_{v_0v_1v_2}=  e_{v_1v_2}-e_{v_0v_2}+
e_{v_0v_1}$, $\partial e_{v_0v_1}=e_{v_1}-e_{v_0}$, $\partial e_{v_0v_2}=e_{v_2}-e_{v_0}$, $\partial e_{v_1v_2}=e_{v_2}-e_{v_1}$. Hence, by Definition \ref{d5.9}, 
$$
H_n^{\bold Q}(K)=\begin{cases} R& \text{for} \ n=0,\\
                                 0 & \text{for} \ n\geq 1.\\
 \end{cases}
$$
Similarly to the case $H_*^{\bold Q}$ we can consider the case of homology groups $H_*^{I_k}(K)$ for $k\geq 1$. For every $k\geq 1$ we obtain 
$$
H_n^{I_k}(K)=\begin{cases} R& \text{for} \ n=0,1,\\
                                 0 & \text{for} \ n\geq 2.\\
 \end{cases}
$$

 ii) Let $K^{\prime}$ be the simple cubical set with the non-degenerate cubes 
$K_0^{\prime}=\{\bold 0,\bold 1,\bold 2\}$, $K_1^{\prime}=\{\bold i, \bold j, \bold k\}$ and the face maps $\partial^0_1(\bold i)=\bold 0, \partial^1_1(\bold i)=\bold 1, 
\partial^0_1(\bold j)=\bold 1, \partial^1_1(\bold j)=\bold 2, \partial^0_1(\bold k)=\bold 2, \partial^1_1(\bold k)=\bold 0$. Similarly to above  we have 
$$
H_n(K^{\prime})=\begin{cases} R& \text{for} \ n=0,1,\\
                                 0 & \text{for} \ n\geq 2.\\
 \end{cases}
$$
In this case  the simple quiver realization $|K^{\prime}|_{\bold Q}$ gives the digraph $G^{\prime}$
$$
\begin{matrix}
&&v_2&&\\
&\swarrow &   &\nwarrow&\\
v_0 &&\rightarrow&&v_1.\\
\end{matrix}
$$
Furthermore,  the consideration is the similar to the case i) of the Example \ref{e5.10},  and we obtain 
$$
H_n^{\bold Q}(K^{\prime})=\begin{cases} R& \text{for} \ n=0,1\\
                                 0 & \text{for} \ n\geq 2\\
 \end{cases}
$$
since in this case $\Omega_2(G^{\prime})=0$.  For every $k\geq 1$ we obtain 
$$
H_n^{I_k}(K^{\prime})=\begin{cases} R& \text{for} \ n=0,1\\
                                 0 & \text{for} \ n\geq 2.\\
 \end{cases}
$$
\end{example}

\begin{example} \label{e5.11} \rm Let $S$ be a finite simplicial complex. Define the  digraph $G_S$ with the set of vertices 
$V_{G_S}$ consisting of the simplexes of $S$,   and there is an arrow 
$(\sigma \to \tau) \in E_{G_S}$ for $\sigma, \tau \in V_{G_S}$ if and only if  $\sigma\supset \tau$ and $\operatorname{dim} \sigma = \operatorname{dim} \tau +1$ (see  \cite{Mi3}). 
 Then  as follows from the definition,   the digraph $G_S$ can be considered  as the quiver  realization $|K_S|_{\bold Q}$ of the simple  cubical set $K_S$ in which non-degenerate cubes correspond to cubes of the digraph $G_S$.  Moreover, the $CW$-complex  $|K_S|_{\bold{Top}}$ is homeomorphic to the polyhedron $|S|$, and there are 
the following isomorphisms of homology groups
$$
H_*(|S|)=H_*(|K_S|_{\bold{Top}})=H_*^{path}(G_S)=H_*^{path}(K_S). 
$$
 
\end{example}

From now we return to the consideration of an arbitrary (no necessary simple) cubical  set $K$
and  its quiver realizations  $|K|_{\bold Q}$ and $|K|_{\bold Q}^{I_k}$ which were  defined in Definitions \ref{d4.5}, \ref{d4.6}.
Now we  can define the path homology of the  cubical set $K$ using the path homology which was constructed in  \cite{Forum} for arbitrary  finite quiver.

\begin{definition}
\label{d5.12} \rm  For $n\geq 1$, an \emph{\ elementary $n$-path $p$ in a quiver} $Q=(V,E,s,t)$ is a non-empty sequence $a_{1},\dots
,a_{n}$ of arrows in $Q$ such that $t(a_{i})=s(a_{i+1})$ for $1\leq i\leq n-1$.  Denote this path by $p=a_{1}\dots a_{n}$. Define \emph{\ the
start vertex of }$p$ by $s(p)=s\left( a_{1}\right) $ and \emph{the end
vertex of }$p$ by $t(p)=t\left( a_{n}\right) $.

For $n=0$ \emph{an elementary $0$-path} is defined by  $p:=v$
where $v\in V$ is any vertex and  we define $s(p)=t(p)=v$. The number $n$ is called the length of a $n$-path $p$ and is denoted by $
\left\vert p\right\vert$.  The set of all elementary $n$-paths in  $Q$ is denoted by $P_{n}Q$ and $PQ=\bigcup_{n\geq 0}P_{n}Q$.  
\end{definition}

Let  $\Lambda_{*}(Q) =\sum_{n\geq 0}\Lambda_n(Q)$ be the graded  $R$-module 
where $\Lambda_n(Q)$  is the free $R$-module spanned by all  elementary
$n$-paths in $Q$.  We set  $\Lambda_{-1}(Q)=0$. We can equip  $\Lambda_{*}(Q)$ by a structure of a graded algebra. We define a multiplication on the set of elementary paths using \emph{operation of concatenation} and  extend this operation by linearity. For elementary paths  $p=a_{1}\dots
a_{n}$ and $q=b_{1}\dots b_{m}$ with $n,m\geq 1$ the concatenation  $p\cdot q\in PQ$ is defined by setting 
$p\cdot q=
a_{1}\dots a_{n}b_{2}\dots b_{m}$ if $t(a_{n})=s(b_{1})$,   and $p\cdot q=0$
otherwise. For the paths $p=v\in V$ and $q=b_{1}\dots b_{m}$, we set 
$p\cdot q= q$ if $v=s(b_{1})$, $q\cdot p =q$ if $v=t(b_m)$,   and $p\cdot q=0, q\cdot p=0$ otherwise.  For the paths $p=v$, $q=w$ where $v,w\in V$, we set
$p\cdot q=v$ if $v=w$, and  $p\cdot q=0$ otherwise. From now we suppose  that no positive integer in the commutative ring $R$ is a zero divisor.  

\begin{definition}
\label{d5.13} \rm A quiver $Q$ is called \emph{complete of power} $N$ if, for
any two vertices $v,w$ there is exactly $N$ arrows with the start vertex $v$
and the end vertex $w$.
\end{definition}

 Let $Q=(V,E,s,t)$ be a complete quiver of  power $N\geq 1$. Define a
product 
$
\Lambda _{1}(Q)\times \Lambda _{1}(Q)\rightarrow \Lambda _{1}(Q)
$
on the arrows $a,b\in E$ by setting 
$[ab]=\sum c$ for $t(a)=s(b),s(c)=s(a),t(c)=t(b)$ and $[ab]=
0$ otherwise,  and then extend it by linearity.   For $n\geq 0$ and $0\leq i\leq n+1$, define  homomorphisms 
$
\partial _{i}\colon \Lambda _{n+1}(Q)\rightarrow \Lambda _{n}(Q)
$ on the basic elements as follows: $\partial_0 v=0$ for $v\in V$, 
$\partial _{0}a=Nt(a)$ and $\partial _{1}a=Ns(a)$ for $a\in E$,  
$\partial _{0}(a_1\dots a_n)=N(a_{2}\dots a_{n})$  and $\partial
_{n}(a_1\dots a_n)=N(a_{1}\dots a_{n-1})$  for $n\geq 2$, 
$
\partial _{i}(a_0\dots a_n)=a_{0}\dots a_{i-2}[a_{i-1}a_{i}]a_{i+1}\dots
a_{n}$
for $n\geq 1$, $1\leq i\leq n$.

\begin{theorem}\label{t5.14} {\rm \cite{Forum}}  Let $Q$ be a complete quiver of power $N$. For all $n\geq -1$, define
homomorphisms $\partial \colon \Lambda _{n+1}(Q)\rightarrow \Lambda _{n}(Q)$
by 
$ \partial =\sum_{i=0}^{n+1}(-1)^{i}\partial _{i}$.
Then $\partial ^{2}=0$ and $\Lambda_*(Q)$ is  a  chain complex with the differential $\partial$. 
\end{theorem}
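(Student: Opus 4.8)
The plan is to use the classical template showing that a ``semisimplicial'' family of face operators satisfying the identities $\partial_i\partial_j=\partial_{j-1}\partial_i$ for $i<j$ assembles into a differential. Everything in sight is $R$-linear, so it suffices to work on elementary paths. I fix an elementary path and treat separately the three kinds of face operator: the two \emph{outer} faces $\partial_0$ (delete the leftmost arrow and multiply by $N$) and $\partial_{n+1}$ (delete the rightmost arrow and multiply by $N$), and the \emph{inner} faces $\partial_i$, $1\le i\le n$, which replace a composable pair $a\,b$ by the bracket $[ab]=\sum c$, the sum running over the $N$ arrows $c$ with $s(c)=s(a)$ and $t(c)=t(b)$.

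The first step is to establish $\partial_i\partial_j=\partial_{j-1}\partial_i$ for $i<j$ on each $\Lambda_\bullet(Q)$, organized by cases according to how the two faces interact. When the two faces act on disjoint blocks of arrows — two non-adjacent inner faces, or an outer face together with an inner or outer face with no overlap — the two sides agree verbatim; here one only checks that each side picks up exactly one factor of $N$ from whichever outer face is present, together with one further factor of $N$ each time an already-formed bracket $[ab]$ is deleted or re-contracted, since $[ab]$ is literally a sum of $N$ arrows. The only substantive cases are when the two faces overlap: (i) two \emph{adjacent} inner faces, which forces the relation $[[ab]c]=[a[bc]]$; and (ii) an outer face adjacent to an inner face, e.g. $\partial_0\partial_1=\partial_0\partial_0$, which reduces to $\partial_0[ab]=N^2\,t(b)$ and the analogous computation on the other side. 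In each overlapping case one verifies that the two expressions enumerate, with multiplicity $N$, exactly the $N$ arrows from the relevant start vertex to the relevant end vertex, so the two sides coincide; this is precisely where completeness of power $N$ enters. The small lengths ($\Lambda_1$, $\Lambda_2$) and the convention $\partial_0 v=0$ are handled by direct substitution into the same computation.

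Granting the face identities, the second step is the standard telescoping argument: on $\Lambda_\bullet(Q)$,
$$
\partial\circ\partial=\sum_{i,j}(-1)^{i+j}\,\partial_i\partial_j=\sum_{i<j}(-1)^{i+j}\partial_i\partial_j+\sum_{i\ge j}(-1)^{i+j}\partial_i\partial_j.
$$
Substituting $\partial_i\partial_j=\partial_{j-1}\partial_i$ in the first sum and relabelling the summation indices turns it into $-\sum_{i\ge j}(-1)^{i+j}\partial_i\partial_j$, which cancels the second sum. Hence $\partial^2=0$, so $(\Lambda_*(Q),\partial)$ is a chain complex, as claimed.

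The main obstacle is the bookkeeping of the constants in the first step: one must check that every collision of an outer face with a bracket, and every nesting of two brackets, produces matching powers of $N$ on the two sides of the face identity, so that the identities hold on the nose rather than up to a nonzero scalar. The heart of it is the weak associativity $[[ab]c]=[a[bc]]$, both sides being equal to $N\sum_d d$ summed over the $N$ arrows $d$ from $s(a)$ to $t(c)$ — an identity that holds exactly because $Q$ is complete of power $N$.
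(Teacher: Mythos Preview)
The paper does not supply its own proof of this theorem: it is quoted verbatim from \cite{Forum} and stated without argument. There is therefore nothing in the present paper to compare your proposal against.

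That said, your approach is correct and is exactly the one used in the cited source. One verifies the simplicial face relations $\partial_i\partial_j=\partial_{j-1}\partial_i$ for $i<j$ by a case analysis on outer versus inner faces, the only nontrivial ingredient being the ``weak associativity'' $[[ab]c]=[a[bc]]=N\sum_d d$ (sum over the $N$ arrows $d$ from $s(a)$ to $t(c)$), which holds precisely because $Q$ is complete of power $N$. The bookkeeping of the factors of $N$ that you flag is genuine but routine: each outer face contributes one $N$, and each bracket, being a sum of $N$ arrows, contributes a further $N$ whenever it is hit by an outer face or another bracket; in every instance the two sides of the face relation pick up the same power. Once the face relations are in hand, the standard reindexing $\sum_{i<j}(-1)^{i+j}\partial_i\partial_j=-\sum_{i\ge j}(-1)^{i+j}\partial_i\partial_j$ gives $\partial^2=0$.
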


For any quiver $Q=(V,E,s,t)$ we can define its completion quiver $\widetilde{Q}=(\widetilde{V},\widetilde{E},%
\widetilde{s},\widetilde{t})$ as follows.  Let $\mu(v,w)$ be a number of arrows with the   start vertex $v$ and the end vertex $w$,  and let $M\geq \operatorname{max} \{\mu(v,w)|\, v,w\in V\}$.  We put $\tilde{V}=V$ and $\widetilde{E}$ is obtained from $E$   by adding  $M-\mu
(v,w)$ new arrows from $v$ to $w$ for any  ordered pair $v, w\in V$.   Thus we obtain 
 a complete quiver  $\widetilde{Q}$ of power $M$ and  a natural inclusion of quivers $\tau \colon Q\rightarrow \widetilde{Q}$ which  induces an inclusion of $R$-modules 
$
\tau _{\ast }:\Lambda _{n}(Q){\rightarrow }\Lambda _{n}(\widetilde{Q})$
for $n\geq 0$. We have the restriction $\partial|_{\Lambda _{n+1}(Q)}\colon \Lambda _{n+1}(Q)\to \Lambda _{n}(\widetilde Q)$ of the differential $\partial \colon \Lambda _{n+1}(\widetilde Q)\rightarrow \Lambda _{n}(\widetilde Q)$ which we continue to denote by $\partial$.  For $n\geq 0$, consider the
submodules of $\Lambda _{n}(Q)$ given by 
\begin{equation*}
\Omega _{n}^M(Q):=\left\{ v\in \Lambda _{n}(Q):\partial v\in \Lambda
_{n-1}(Q)\right\}. 
\end{equation*}
We have 
$
\partial \left( \Omega _{n}^M\left( Q\right) \right) \subset \Omega
_{n-1}^M\left( Q\right)$ as  follows directly from the identity $\partial ^{2}=0$ in $\Lambda
_{\ast }(\widetilde{Q})$. Hence, $\Omega _{\ast }^M\left( Q\right) $  with the induced from 
$\partial$ differential is a chain complex.

\begin{definition} 
\label{d5.15} \rm  Let $Q$ be a quiver and  let $M\geq \operatorname{max} \{\mu(v,w)|\, v,w\in V\}$.  For  $n\geq 0$ the 
\emph{M-homology groups of the quiver} $Q$ \emph{with coefficients in} $R$   are defined as follows
$
H_{n}^M(Q,R)=H_{n}(\Omega _{\ast }^M(Q))
$, 
\end{definition}

Let $K$ be a cubical set.  For every ordered pair $(v, w)$  of $0$-dimensional cubes (including the case $(v,v)$),  we denote by $\mu(v, w)$  the number of $1$-dimensional cubes $a\in K_1$ such that $\partial_1^0(a)=v, \partial_1^1(a)=w$. Let $M\geq \operatorname{max} \{\mu(v,w)|\, v,w\in K_0\}$. Then the quiver 
$|K|_{\bold Q}=(V,E,s,t)$ is the sub-quiver of the complete   quiver $\widetilde{Q}=(\widetilde{V},\widetilde{E},
\widetilde{s},\widetilde{t})$ of power $M$  that was defined above.

\begin{definition} 
\label{d5.16} \rm  The \emph{simple quiver M-homology groups}  of  a cubical set $K$$  $\emph{\ with coefficients in }$R$   are defined by 
$
H_{n}^{M}(K,R):=H_{n}\left(\Omega _{\ast}^M(|K|_{\bold Q})\right)
$. 
\end{definition}

Let $K$ be a connected cubical set. We note that the  modules $\Omega _{n}^M(|K|_{\bold Q})$ and  $\Omega _{n}^{M^{\prime}})(|K|_{\bold Q})$ are isomorphic for any numbers $M, M^{\prime}\geq 
\operatorname{max} \{\mu(v,w)|\, v,w\in K_0\}$  and $n\geq 0$  but the differentials of the corresponding chain complexes  are different \cite[Th. 4.4]{Forum}. 
 
\begin{example}\label{e5.17} \rm Let $K$ be a cubical set  which has only  the nondegenerated cubes $K_0=\{v_0,v_1,v_2\}, K_1=\{a_1,a_2,b_1,b_2\}$  and $\partial_1^0(a_1)=\partial_1^0(a_2)=v_0$, $\partial_1^1(a_1)=\partial_1^1(a_2)=v_1$, 
$\partial_1^0(b_1)=\partial_1^0(b_2)=v_1$, $\partial_1^1(b_1)=\partial_1^1(b_2)=v_2$. 
We have the following quiver 
realization  $Q=|K|_{\bold Q}$:
\begin{equation*}
v_0\overset{\overset{a_1}\longrightarrow}{\underset{a_2}\longrightarrow}  v_1\overset{\overset{b_1}\longrightarrow}{\underset{b_2}\longrightarrow}  v_2.
\end{equation*}

 For the quiver $Q$ we have 
$\operatorname{max}\{\mu(v,w)|v,w\in V\}=2$. Let $M=2$ and we compute homologies 
$
H_{n}^2(K,\mathbb R)=H_{n}(\Omega _{\ast }^2(Q))
$ 
with   coefficient the real numbers.

We have 
$
\Lambda_{0}(Q)=\langle v_{0},v_{1},v_{2}\rangle$,
$$
\Lambda_1(Q)=\langle a_1,a_2,b_1,b_2\rangle,  \ \    
\Lambda_2(Q)=\langle a_1b_1, a_1b_2, a_2b_1, a_2b_2\rangle, 
$$
and $\Lambda_{i}(Q)=0$  for $i\geq 3$. Direct computation  gives 
$$
\Omega _{0}^{2}(Q)=\langle v_{0},v_{1},v_{2}\rangle ,\ \ \Omega
_{1}^{2}(Q)=\langle a_1,a_2, b_1,b_2\rangle,\
$$
$$
 \Omega_{2}^{2}(Q)=\langle a_1b_1 -a_1b_2, a_1b_1 -a_2b_1, a_1b_1 -a_2b_2, 
a_2b_1 -a_2b_2,
 a_2b_1 -a_1b_2, 
a_2b_2 -a_1b_2.
\rangle
$$
and $\Omega _{i}^{2}(Q)=0$ for $i\geq 3$. There are only three independent elements in $\Omega_{2}^{2}(Q)$, and we obtain  $
 \Omega_{2}^{2}(Q)=\langle a_1b_1 -a_1b_2, a_1b_1 -a_2b_1, a_1b_1 -a_2b_2\rangle$. 
Thus $\operatorname{rank} \Omega _{0}^{2}(Q)=3, \operatorname{rank} \Omega _{1}^{2}(Q)=4,  \operatorname{rank} \Omega _{2}^{2}(Q)=3$. Direct computation  of the  differential  gives that the rank of the image of $\partial_1$  in $\Omega_0^2(Q)$ is 2 
and, hence $H_0^2(K,\mathbb R)=\mathbb R$. It follows from this  that 
$$
\operatorname{rank} \left[\operatorname{Ker}\{\partial_1\colon \Omega _{1}^{2}(Q)\to 
\Omega _{0}^{2}(Q)\}\right]=2.
$$
The image of the differential $\partial_2\colon \Omega _{2}^{2}(Q)\to 
\Omega _{1}^{2}(Q)$ is generated by the elements $a_1-a_2, b_1-b_2$. Hence 
$$
\operatorname{rank} \left[\operatorname{Im}\{\partial_2\colon \Omega _{2}^{2}(Q)\to 
\Omega _{1}^{2}(Q)\}\right]=2.
$$
Thus, we obtain $H_1^2(K,\mathbb R)=0$, $H_2^2(K,\mathbb R)=\mathbb R$. 
\end{example}


\begin{thebibliography}{20}

\bibitem{Antolini}
Rosa Antolini, \emph{Cubical structures, homotopy theory}, Annali di Matematica
  pura ed applicata(IV) \textbf{CLXXVIII} (2000), 317--324.

\bibitem{Nonab}
Ronald Brown, Philip~J. Higgins, and Rafael Sivera, \emph{Nonabelian
  {A}lgebraic {T}opology. {F}iltered {S}paces, {C}rossed {C}omplexes, {C}ubical
  {H}omotopy {G}roupoids}, EMS Tracts in Mathematics 15, 2007.

\bibitem{Enoch}
E.~E. Enochs and I.~Herzog, \emph{A homotopy of quiver morphisms with
  applications to representations}, Canad. J. Math. \textbf{51} (1999).

\bibitem{F}
H.~Federer, \emph{Lectures in algebraic topology}, Brown University,
  Providence, R.I., 1962.

\bibitem{Grandis1}
Marco Grandis, \emph{Directed combinatorial homology and noncommutative tori
  ({T}he breaking of symmetries in algebraic topology)}, Mathematical
  Proceedings of the Cambridge Philosophical Society \textbf{138} (2005),
  233--262.

\bibitem{Forum}
A.~Grigor’yan, Yu. Muranov, V.~Vershinin, and S.-T. Yau, \emph{Path homology
  theory of multigraphs and quivers}, Forum Mathematicum:
  https://doi.org/10.1515/forum-2018-0015 (2018).

\bibitem{Mi4}
A.~A. Grigor'yan, R.~Jimenez, and Yu.~V. Muranov, \emph{Homology of
  {D}igraphs}, Mathematical Notes \textbf{109} (2021), 712--726.

\bibitem{Axioms}
Alexander Grigor'yan, Rolando Jimenez, Yuri Muranov, and Shing-Tung Yau,
  \emph{On the path homology theory of digraphs and {E}ilenberg-{S}teenrod
  axioms}, Homology, Homotopy and Applications \textbf{20(2)} (2018), 179--205.

\bibitem{MiHomotopy}
Alexander Grigor'yan, Yong Lin, Yuri Muranov, and Shing-Tung Yau,
  \emph{Homotopy theory for digraphs}, Pure and Applied Mathematics Quarterly
  \textbf{10} (2014), 619--674.

\bibitem{Mi3}
Alexander Grigor'yan, Yuri Muranov, and Shing-Tung Yau, \emph{Graphs associated
  with simplicial complexes}, Homology, Homotopy, and Applications \textbf{16}
  (2014), 295--311.

\bibitem{Hatcher}
Allen Hatcher, \emph{Algebraic {T}opology}, Cambridge University Press, 2002.

\bibitem{Hilton}
P.~J. Hilton and S.~Wylie, \emph{Homology theory}, Cambridge  University Press,
  1960.

\bibitem{Jardine}
J.~F. Jardine, \emph{Categorical homotopy theory}, Homology, Homotopy and
  Applications \textbf{8} (2006), 71--144.

\bibitem{Mi}
R.~Jimenez and Yuri Muranov, \emph{Singular cell homology},  Submitted
 (2023). 

\bibitem{Kan}
D.~M. Kan, Proceedings of the National Academy of Sciences of the United States
  of America \textbf{41} (1955), 1092--1096.

\bibitem{Massey}
W.~S. Massey, \emph{Singular homology theory}, vol. Graduate Texts in
  Mathematics, 70, Springer-Verlag, New York, 1980.

\bibitem{May}
J.~Peter May, \emph{Simplicial objects in algebraic topology}, The University
  of Chicago Press, Chicago, 1967.

\bibitem{MurVer}
Y.~V. Muranov and V.~V. Vershinin, \emph{Cubical and Simplicial Sets in the
  Category of Quivers}, Mathematical Notes  \textbf{114}  (2023), 149--153.

\bibitem{Serre}
Jean-Pierre Serre, \emph{Homologie singulière des espaces fibrés.
  applications.}, Ann. of Math. \textbf{(2) 54} (1951), 425--505.

\end{thebibliography}
\end{document}